\newcommand{\tsk}[1]{\textcolor{YellowOrange}}
\newcommand{\togliere}[1]{}
\def\@endtheorem{\endtrivlist}
\newcommand{\ev}{\operatorname{ev}}
\newtheorem{teo}{Theorem}[section]
\newtheorem*{mainteoA}{Theorem A}
 \newtheorem*{mainteoB}{Theorem B}
\newtheorem{defin}[teo]{Definition}
\newtheorem{prop}[teo]{Proposition}
\newtheorem{lemma}[teo]{Lemma}
\theoremstyle{definition}
\newtheoremstyle{dico}
 {\baselineskip}   
  {\topsep}   
  {}  
  {0pt}       
  {} 
  {.}         
  {5pt plus 1pt minus 1pt} 
  {}          
\theoremstyle{dico}
\numberwithin{equation}{section}
\newcommand{\ra}{\rightarrow}
\newcommand{\C}{\mathbb{C}}
\newcommand{\R}{\mathbb{R}}
\newcommand{\Zeta}{{\mathbb{Z}}}
\newcommand{\meno}{^{-1}}
\newcommand{\alfa}{\alpha}
\newcommand{\alf}{\alpha}
\newcommand{\la}{\lambda}
\newcommand{\restr}[1]          {\vert_{#1}}
\newcommand{\Hom}{\operatorname{Hom}}
\newcommand{\End}{\operatorname{End}}
\newcommand{\Ann}{\operatorname{Ann}}
\newcommand{\liek}{\mathfrak{k}}
\newcommand{\lieg}{\mathfrak{g}}
\newcommand{\liep}{\mathfrak{p}}
\newcommand{\ad}{{\operatorname{ad}}}
\renewcommand{\setminus}{-}
\newcommand{\om}{\omega}
\renewcommand{\phi}{\varphi}
\newcommand{\cd}{\cdot}
\newcommand{\im}{\operatorname{Im}}
\newcommand{\lra}{\longrightarrow}
\newcommand{\id}{\operatorname{id}}
\newcommand{\est} {\Lambda}
\renewcommand{\sp}{\mathfrak{sp}}
\newcommand{\PP}{\mathbb{P}}   
\renewcommand{\phi}             {\varphi}
\newcommand{\sieg}{\mathfrak{S}}
\newcommand{\M}{\mathsf{M}}
\newcommand{\A}{\mathsf{A}}
\newcommand{\Ag}{\mathsf{A}_g}
\newcommand{\Sp}                {\operatorname {Sp}}
\renewcommand{\Im}              {\operatorname{Im}}
\newcommand{\debar }            {\bar {\partial } }
\newcommand{\ag}{\mathsf{A}_g}
\newcommand{\mg}{\mathsf{M}_g}
\newcommand{\mihi}[1]{}
\newcommand{\vc} {{V_\C}}
\newcommand{\vcs} {{V_\C^*}}
\newcommand{\ove}[1]{\overline{#1}}
\newcommand{\bra}{\mathbf{B}}
\newcommand{\brad}{\bra^*}
\newcommand{\barc}{ {\bar {C}}}
\def\LaTeX{%
	\let\Begin\begin
	\let\salta\relax
	\let\finqui\relax
	\let\futuro\relax}
\def\Edef{\end{definition}}
\def\Elem{\end{lemma}}
\def\Edim{\end{proof}}
\def\Eprop{\end{prop}}
\def\Ecor{\end{corollary}}
\def\Ethm{\end{theorem}}
\def\Erem{\end{remark}}
\def\vc{V_{\mathbb{C}}}
\def\delbar{\bar{\partial }}
\def\del{\partial}
\def\Berg{\mathbf{K}}
\def\ku{k_u}
\def\kv{k_v}
\begin{document}
\author{Alessandro Ghigi, Carolina Tamborini}
\title{Bergman kernel and period map for curves}

\address{Universit\`{a} di Pavia}
\email{alessandro.ghigi@unipv.it}
\email{c.tamborini2@campus.unimib.it} 
\subjclass[2010]{14D07, 32A25, 14H15}

\thanks{The authors were partially supported by MIUR PRIN 2017
  ``Moduli spaces and Lie Theory'' , by MIUR, Programma Dipartimenti
  di Eccellenza (2018-2022) - Dipartimento di Matematica
  ``F. Casorati'', Universit\`a degli Studi di Pavia and by INdAM
  (GNSAGA).  }

\maketitle
\begin{abstract}
  As for any symmetric space the tangent space to Siegel upper-half
  space is endowed with an operation coming from the Lie bracket on
  the Lie algebra.  We consider the pull-back of this operation to the
  moduli space of curves via the Torelli map.  We characterize it in
  terms of the geometry of the curve, using the Bergman kernel form
  associated to the curve.  It is known that the second fundamental
  form of the Torelli map outside the hyperelliptic locus can be seen
  as the multiplication by a certain meromorphic form.  Our second
  result says that the Bergman kernel form is the harmonic
  representative - in a suitable sense - of this meromorphic form.

\end{abstract}

\tableofcontents

\section{Introduction}

\subsection{} \label{intro} Let $X$ be a Riemannian symmetric
space. For a fixed point $x \in X$ we have $X=G/K$, where $G$ is a Lie
group (independent of $x$) and $K=G_x$ is the stabilizer of
$x$. Moreover there is a Cartan decomposition
$\lieg = \liek \oplus \liep$ such that $[\liep, \liep ] \subset \liek$
and $[\liek, \liep ]\subset \liep$.  Since $\liep \cong T_xX$, the Lie
bracket on $\lieg$ gives rise to a kind of operation
$B_x : T_xX \times T_x X \cong \liep \times \liep \ra \liek =
\lieg_x$.  Let $\mathbf{S} = G \times_K \liek$ be the homogeneous
bundle over $X$ corresponding to the adjoint representation of
$K$. Then $\mathbf{S}_x = \lieg_x$ for any $x$, so $B$ is a section of
$\est^2 T^*X \otimes \mathbf{S}$.  Since the differential geometry of
$X$ can be studied by means of Lie theory, the tensor $B$, which
reflects Lie bracket, is of central importance.

\subsection{} \label{intro2}
Since the tensor $B$ is invariant by the action of $G$ it makes sense
also on any locally symmetric space $X$.  In this paper we consider
the case where $X$ is $\Ag$, the moduli space of principally polarized
abelian varieties of dimension $g$ over $\mathbb{C}$, which is a
locally symmetric space obtained as a quotient of the Siegel upper
half-space $ \sieg_g$.  Denote by $\M_g$ the moduli space of curves of
genus $g$. We are interested in the Torelli map $j : \M_g \ra \A_g$,
which associates to $[C]\in \M_g$ its Jacobian variety $[JC]\in \A_g$.
Our motivation comes from the study of totally geodesic subvarieties
of $\Ag$ that are generically contained in $j(\mg)$
\cite{hain,cfg,deba}. This is also connected to the Coleman-Oort
conjecture \cite{moonen-oort}.  The tensor $B$ controls the local
geometry of $\Ag$ and its pull-back $\bra=j^*B$ to $\mg$ should give
important information on the extrinsic geometry of the inclusion
$j(\mg) \subset \ag$. For example we expect that the study of $\bra$
will give constraints on the existence of Lie triples tangent to
$\mg$.

\subsection{}
The first step in this direction is the computation of $\bra$ at a
moduli point $[C]\in \mg$ in terms of the geometry of the curve
$C$. This is the first main result of this note.  Let $\barc$ denote
the conjugate curve, i.e. with the opposite complex structure.  We
first show that the dual map of $\bra$ can be seen as map
$$\bra^*: H^0(K_C)\otimes H^0(K_{\overline{C}})\rightarrow
H^0(2K_C)\otimes H^0(2K_{\overline{C}}).$$ Secondly, we consider the
algebraic surface $Z=C\times \overline{C}$. By K\"unneth formula
$H^0(Z, K_{Z})\simeq H^0(K_C)\otimes H^0(K_{\overline{C}})$ and
$H^0(2K_{Z})\simeq H^0(2K_C)\otimes H^0(2K_{\overline{C}})$.  With
these identifications we prove the following.
\begin{mainteoA}
  The map
\begin{gather*}
  \bra^* : H^0(Z,K_Z) \lra H^0(Z, 2K_Z)
\end{gather*}
coincides with the multiplication by $-i\Berg$,  where  $\Berg\in H^0(Z, K_{Z})$  is the Bergman kernel of the curve $C$.
\end{mainteoA}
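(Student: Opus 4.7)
My plan is to compute $\bra^*$ directly by unpacking the definitions and then to verify the identification with multiplication by $-i\Berg$ on generators. First I recall the Cartan decomposition at $[JC]\in\ag$: with $H^{1,0}=H^0(K_C)$ and $H^{0,1}=H^0(K_{\barc})$, one identifies $\liep^{1,0}$ with symmetric linear maps $H^{1,0}\to H^{0,1}$, $\liep^{0,1}$ with symmetric maps $H^{0,1}\to H^{1,0}$, and $\liek_\C$ with $\End(H^{1,0})$ (the $\End(H^{0,1})$-component being determined by the first). The bracket $\liep^{1,0}\otimes\liep^{0,1}\to\liek_\C$ is then composition up to sign: for $S\in\liep^{1,0}$ and $T\in\liep^{0,1}$, an elementary block-matrix computation shows that the $\End(H^{1,0})$-component of $[S,T]$ is $-T\circ S$. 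Since the differential of the Torelli map $dj:H^1(T_C)\to\liep^{1,0}$ is the cup product $\xi\mapsto(\omega\mapsto\xi\cup\omega)$ and similarly for $d\bar\jmath$, I obtain $\bra(\xi,\bar\eta)(\omega)=-d\bar\jmath(\bar\eta)(\xi\cup\omega)\in H^{1,0}$.

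Next I dualize. Using the Hermitian $L^2$ product $(\omega,\tau)=\tfrac{i}{2}\int_C\omega\wedge\bar\tau$ to identify $(H^{1,0})^*\cong H^{0,1}$, the trace pairing to identify $\End(H^{1,0})^*\cong\End(H^{1,0})\cong H^0(K_C)\otimes H^0(K_{\barc})$, and Serre duality $H^1(T_C)^*\cong H^0(2K_C)$, the dual map $\bra^*$ takes the form asserted in the theorem. After K\"unneth, multiplication by $-i\Berg$ sends a generator $\sigma_i\otimes\bar\sigma_j$ --- with $\{\sigma_i\}$ an $L^2$-orthonormal basis of $H^0(K_C)$ so that $\Berg=\sum_k\sigma_k\boxtimes\bar\sigma_k$ --- to $-i\sum_k(\sigma_k\sigma_i)\otimes(\bar\sigma_k\bar\sigma_j)$.

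It then suffices to check, for every $\xi,\eta\in H^1(T_C)$ and all $i,j$, the identity
\begin{equation*}
(\bra(\xi,\bar\eta)\sigma_i,\sigma_j)_{L^2}\;=\;-i\sum_k\langle\xi,\sigma_k\sigma_i\rangle\,\overline{\langle\eta,\sigma_k\sigma_j\rangle},
\end{equation*}
where $\langle\cdot,\cdot\rangle$ denotes Serre duality. By the first step the left-hand side equals $-(d\bar\jmath(\bar\eta)(\xi\cup\sigma_i),\sigma_j)_{L^2}$, which I rewrite via Hodge theory as a surface integral involving harmonic representatives of $\xi$ and $\eta$; inserting the completeness relation $\sum_k(\cdot,\sigma_k)\sigma_k=\mathrm{id}$ --- the reproducing property that encodes the Bergman kernel --- factors the integral as a sum of products of Serre duality pairings, yielding the right-hand side.

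The main obstacle I anticipate is the bookkeeping of constants and signs: three different dualities (Serre, Hodge/$L^2$, trace) enter with their own normalizations, and the overall scalar $-i$ is a specific combination of the $\tfrac{i}{2}$ in the Hodge product, the minus sign coming from the bracket, and conventions in Serre duality. Making the final factor come out exactly as $-i$ amounts to pinning down one specific identification of $\liek_\C$ with $H^0(K_C)\otimes H^0(K_{\barc})$ --- canonical up to a scalar --- and the content of the theorem is in part that the natural choice is the one for which the Bergman kernel appears with coefficient $-i$.
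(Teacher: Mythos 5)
Your outline follows the same broad strategy as the paper --- identify the bracket on $\liep^{1,0}\times\liep^{0,1}$, pull back along $dj$, dualize, and test against the Bergman kernel --- but you reorganize it around the Hermitian $L^2$ product and an orthonormal basis where the paper works with the symplectic form $Q^*$, Schiffer variations and elementary potentials. The difficulty is that the entire content of the statement beyond ``$\bra^*$ is proportional to $\Berg$'' is the exact scalar $-i$, and your argument, as set up, cannot produce it. The paper identifies $(\End H^{1,0})^*$ with $H^{1,0}\otimes H^{0,1}$ by sending $\om\otimes\ove{\om'}$ to the functional $t\mapsto Q^*(\ove{\om'},t\om)$ (Lemma \ref{inversi}), and the $-i$ comes out of $Q^*(\ove{\om'},k_v)=i\,\ove{\om'(v)}$; you instead use the Hermitian pairing $\tfrac{i}{2}\int\om\wedge\ove{\tau}$ to identify $(H^{1,0})^*\cong H^{0,1}$. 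These identifications differ by a nontrivial multiple of $i$ (and your normalization $\tfrac{i}{2}$ also differs by a factor of $2$ from the paper's $h(\alpha,\beta)=i\int\alpha\wedge\ove{\beta}$, with respect to which the unitary basis defining $\Berg$ is taken), so the constant you would compute is not the constant in the statement. Your closing paragraph concedes that the identification of $\liek_\C$ with $H^0(K_C)\otimes H^0(K_{\barc})$ is fixed only ``up to a scalar'' and that the ``natural choice'' yields $-i$ --- but pinning down that scalar \emph{is} the theorem; it cannot be deferred to a choice of convention made after the computation.

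Two further concrete points. First, your sign for the bracket disagrees with the paper's: with the transpose identification $(\sp_J)_\C\cong\End H^{1,0}$ of Proposition \ref{lemdue} one gets $B(X,\ove{Y})\mapsto \bar t s$ with a plus sign, hence $\bra(\xi,\bar\eta)(\om)=\bar\eta\cup(\xi\cup\om)$ in Proposition \ref{teojac}; your $-T\circ S$ corresponds to reading off the other diagonal block and is at least inconsistent with your own convention $dj(\xi)=\xi\cup\cdot:H^{1,0}\to H^{0,1}$. Second, the step where you ``rewrite via Hodge theory as a surface integral'' and ``insert the completeness relation'' buries the one genuinely non-formal input: an exact identity relating the harmonic $(0,1)$-representative of $\xi\cup\om$ (equivalently the $L^2$ pairing $(\xi\cup\sigma_i,\ove{\sigma_k})$) to the Serre-duality pairing $\langle\xi,\sigma_i\sigma_k\rangle$, with its precise constant. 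This is exactly what the paper supplies through elementary potentials and Schiffer variations ($\debar f_u=2\pi\ove{k_u}$ in Lemma \ref{berg-potelem}, $\xi_u\cup{}=-2\pi\,\ev_u\otimes\ove{k}_u$, and $\beta(\xi_u)=2\pi i\,\beta(u)$), and it is where the factors of $2\pi$ and $i$ cancel. Without proving some version of this identity and fixing all dualities consistently, your argument establishes only that $\bra^*$ is multiplication by $c\,\Berg$ for an undetermined universal constant $c$.
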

(See Theorem \ref{teomulti}.
See \ref {ssBergman} for the definition of Bergman kernel in the
sense we need.)  In other words the Bergman kernel $\Berg$ governs the
restriction of the Lie bracket to $dj(T\M_g)$.

\subsection{}Another approach to the extrinsic geometry of inclusion
$j(\M_g) \subset \A_g$ outside the hyperelliptic locus uses the second
fundamental form. If $C$ is non-hyperelliptic, the second fundamental
form at $[C]$ has been interpreted  as the 
multiplication 
by a holomorphic section
$\hat{\eta}$ of the line bundle $K_S(2\Delta)$, where $S=C\times C$ and
$\Delta\subset S$ is the diagonal (see  \cite{cfg,cpt}).  This leads back the study of the
behavior of the second fundamental form to the study of the $2-$form
$\hat{\eta}\in H^0(S, K_S(2\Delta))$.

\subsection{} The form $\hat{\eta}$ has been further studied in
\cite{bcfp} in relation with projective structures on compact Riemann
surfaces. Section $5$ of \cite{bcfp} is dedicated to the study of the
cohomology class of the form $\hat{\eta}$ and contains a
characterization of $\hat{\eta}$ as the unique 
element  (up to multiples)  of $H^0(S, K_S(2\Delta))$ with cohomology class in
$H^2(S\setminus \Delta)$ of pure type $(1,1)$.

\subsection{}
In our second result we  give an explicit description of the harmonic representative of the cohomology class of $\hat{\eta}$:
%
%
%
\begin{mainteoB}
  The Bergman kernel is the $(1,1)-$harmonic representative of the
  cohomology class of $\hat{\eta}\in H^0(S, K_S(2\Delta))$ in
  $H^2(S\setminus \Delta, \mathbb{Z})$.  More precisely, there exists
  $\alpha\in H^0(S, \mathcal{A}^{1,0}(\Delta))$ such that
  $$\hat{\eta}- 2\pi \Berg =d\alpha, $$
  that is $\del\alpha=\hat{\eta}$ and $\delbar \alpha=-2 \pi \Berg$.	
\end{mainteoB}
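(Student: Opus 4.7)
My plan is to construct $\alpha$ in two stages: first by writing down a local polar primitive of $\hat\eta$ near $\Delta$, then by adjusting it with smooth corrections so that both $\partial\alpha=\hat\eta$ and $\bar\partial\alpha=-2\pi\Berg$ hold globally on $S$.

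In local coordinates $(z,w)$ near a diagonal point, the form $\hat\eta$ has principal part $\frac{dz\wedge dw}{(z-w)^2}$, so the natural local $(1,0)$-form $\alpha_{\mathrm{loc}}=-\frac{dw}{z-w}$ is a simple-pole primitive with $\partial\alpha_{\mathrm{loc}}=\frac{dz\wedge dw}{(z-w)^2}$ and $\bar\partial\alpha_{\mathrm{loc}}=0$ on $S\setminus\Delta$. Using a cut-off function supported in a tubular neighborhood of $\Delta$, I would glue these local primitives into a global $\alpha_0\in H^0(S,\mathcal{A}^{1,0}(\Delta))$ with $\hat\eta-\partial\alpha_0$ smooth on all of $S$. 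I then argue that the smooth $(2,0)$-form $\hat\eta-\partial\alpha_0$ is $\partial$-exact by a cohomological comparison: the uniqueness statement from Section~5 of \cite{bcfp} forces $[\hat\eta]\in H^2(S\setminus\Delta)$ to be of pure Hodge type $(1,1)$, which in turn implies that the class of $\hat\eta-\partial\alpha_0$ in $H^{2,0}(S)$ vanishes. Solving $\partial\beta=\hat\eta-\partial\alpha_0$ for a smooth $(1,0)$-form $\beta$ and setting $\alpha:=\alpha_0+\beta$ gives an element of $H^0(S,\mathcal{A}^{1,0}(\Delta))$ with $\partial\alpha=\hat\eta$.

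For the $\bar\partial$-equation, a direct computation in the local coordinates above would show that the would-be delta contributions in $\bar\partial\alpha$ cancel thanks to the choice of $\alpha_{\mathrm{loc}}$, so $\bar\partial\alpha$ is a smooth $d$-closed $(1,1)$-form on $S$ and represents a class $[\bar\partial\alpha]\in H^{1,1}(S)$. I would identify this class with $-2\pi[\Berg]$ by combining the K\"unneth identification of $2\pi[\Berg]$ with the $(1,1)$-component of the diagonal class $[\Delta]\in H^2(S,\mathbb{Z})$ and the Gysin exact sequence of the pair $(S,\Delta)$. A final $\partial\bar\partial$-lemma adjustment, replacing $\alpha$ by $\alpha+\partial f$ for a suitable smooth function $f$, would upgrade this identification of cohomology classes to the pointwise equality $\bar\partial\alpha=-2\pi\Berg$; note that this adjustment preserves $\partial\alpha$ because $\partial^2 f=0$.

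The main obstacle will be pinning down the precise constant $2\pi$: it emerges from the residue $\int_{|z-w|=\epsilon}\frac{dw}{z-w}=-2\pi i$ on a transverse disk to $\Delta$, matched against the normalization of the Bergman inner product on $H^0(K_C)$ that defines $\Berg$. A secondary subtlety, to be tracked throughout, is to verify that every global correction made in the construction preserves both the type $(1,0)$ and the simple-pole structure of $\alpha$ along $\Delta$, so that the resulting form really lies in $H^0(S,\mathcal{A}^{1,0}(\Delta))$.
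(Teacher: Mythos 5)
Your plan diverges substantially from the paper's proof, and the divergence exposes two genuine gaps. The paper does not patch together local polar primitives at all: it writes down a single explicit global primitive using the elementary potentials, namely the $(1,0)$-form $\alpha$ with $\alpha(u,v)=2f_v(p)+f_u(q)$, and then Theorem B falls out of two known pointwise identities, $\del f_u=\eta_x(u)$ and $\delbar f_u=2\pi\overline{k_u}$ (Lemma \ref{berg-potelem}), together with the symmetry of $\hat\eta$. In particular the paper's argument is self-contained and actually \emph{reproves} the $(1,1)$-purity of $[\hat\eta]$, whereas you import that purity from \cite{bcfp} as a black box. Note also that the correct local principal part of the paper's primitive is $\frac{dz-2\,dw}{z-w}$, not $-\frac{dw}{z-w}$: the coefficient $2$ is forced by the symmetry argument $2\eta_y(v)(u)-\eta_x(u)(v)=\hat\eta(u,v)$, and your local ansatz has no mechanism to detect it.

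The decisive gap is in your identification of $[\delbar\alpha]$ with $-2\pi[\Berg]$. First, the asserted input is false: the $(1,1)$-component of the K\"unneth/harmonic decomposition of $[\Delta]$ is not $2\pi[\Berg]$; a direct computation (already for $g=1$) gives $[pt]\times[C]+[C]\times[pt]+i([\Berg]-[\overline{\Berg}])$ --- consistent with the fact that $[\Delta]$ is real while $\Berg$, viewed as a $(1,1)$-form on $S$, is not. Second, even setting the constant aside, the argument is close to circular: the purity result only tells you that $[-\delbar\alpha]$ and the putative $\gamma^{1,1}$ agree up to a multiple of $[\Delta]$, and since $\int_\Delta\Berg=-ig\neq 0$ that ambiguity is exactly what the theorem must resolve; you would need an independent residue computation of the current $d[\alpha]$ along $\Delta$ to pin it down, and your claim that the ``delta contributions cancel'' for $\alpha_{\mathrm{loc}}=-\frac{dw}{z-w}$ is unsubstantiated (the residue of a simple-pole $(1,0)$-form along $\Delta$ does contribute a nonzero multiple of $[\Delta]$ in general, and that contribution is precisely what couples the two halves of the statement). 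The remaining steps (gluing so that $\hat\eta-\del\alpha_0$ is smooth, solving $\del\beta=\hat\eta-\del\alpha_0$ once the harmonic $(2,0)$-part vanishes, and the final $\del\delbar$-lemma adjustment by $\del f$) are workable, but without a correct and non-circular computation of the class of $\delbar\alpha$ the proof does not close. I recommend abandoning the cohomological detour and working directly with the elementary potentials, whose $\del$ and $\delbar$ are already computed in the paper in terms of $\eta$ and $\Berg$ respectively.
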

(See Theorem \ref{teoeta}.)  It is quite hard to control the behaviour
of $\hat{\eta}$ outside of the diagonal. Only along $\Delta$ its
behaviour admits an algebraic description, via the second Gaussian map
$\mu_2$, see \cite{cpt}. We expect the above result to allow some
better understanding of $\hat{\eta}$ and the second fundamental form.

\medskip

{\bfseries \noindent{Acknowledgements}}.  The idea to study the Lie
bracket restricted to moduli space arose from discussions with Gian
Pietro Pirola.  The idea of using Bergman kernel in the study of the
map $\bra^*$ was suggested by Indranil Biswas.  We heartily thank both
of them. We also thank Paola Frediani for several interesting
discussions related to the subject of this paper.

\section{Bergman kernel}
\subsection{}
\label{ssBergman}
Let $C$ be a smooth complex projective curve of genus $g\geq 1$. Set
$S:=C\times C$ and let $p,q: S\rightarrow C$ be the projections
$p(x,y)=x$, $q(x,y)=y$.

Let $\barc$ denote the conjugate variety and set $Z:=C\times
\barc$. $Z$ coincides with $S$ as a real manifold, but has a different
complex structure. The projections $p : Z \ra C$, $q: Z \ra \barc$ are
holomorphic.

Denote by $h$ the Hodge Hermitian product on $H^0(C, K_C)$, defined by
\begin{gather*}
  h(\alpha, \beta)=:i\int\alpha\wedge \overline{\beta}.
\end{gather*}
\begin{defin}
  Let $\omega_1,..., \omega_g$ be a unitary basis for $H^0(C,
  K_C)$. Then
	$$\Berg:=\sum_{j=1}^{g} p^*\omega_j \wedge q^*\overline{\omega}_j$$
	is a well-defined $(1,1)-$form on $S$ independent of the
        choice of the unitary basis. It is called the \textbf{Bergman
          kernel form} of the algebraic curve $C$.
      \end{defin}
      This is the  definition of the Bergman kernel form on an arbitrary complex manifold due to Kobayashi \cite{kobayashi-domains}.
It generalizes the classical Bergman kernel on open domains in $\C^n$.
      $\Berg$ can also
      be seen as a holomorphic 2-form on $Z$.  In particular it is a
      harmonic form with respect to any K\"ahler metric on $Z$.  If we
      consider it as a (1,1)-form on $S$, it is harmonic for any
      K\"ahler metric on $S$ which is K\"ahler also on $Z$. In
      particular it is harmonic for any product metric.

      If $x,y\in C$, $T_{(x,y)}S = T_xC \oplus T_yC$.  Thus elements
      of $T_{(x,y)} S$ are pairs $(u,v)$ with $u\in T_xC$ and
      $v\in T_y C$.  Since $\Berg$ is a (1,1)-form, its behaviour is
      controlled by the values $\Berg((u,0),(0,\overline{v}))$ for
      $u\in T_x^{1,0}C$, $v\in T_y^{1,0}C$.

\subsection{}
Although not needed in the following, it is interesting to point out
the following relation between the Bergman kernel and the period
matrix associated to the algebraic curve $C$
(cf. \cite[eq. (2.4)]{wentworth_asymptotics_1991}). Let $Q$ denote the
intersection form on $H_1(C, \mathbb{Z})$ and let $\{a_i, b_i\}$ be a
symplectic basis for $(H_1(C, \mathbb{Z}), Q)$. Consider a basis
$\omega_1,...,\omega_g$ of $H^0(C, K_C)$ normalized with respect to
$\{a_i, b_i\}$ and the period matrix $Z=(z_{ij})$ with
$z_{ij}=\int_{b_j}\omega_i$.  Then, with respect this basis, the
Bergman kernel has the form
\begin{gather}
  \label{berg-periodi}
  \Berg=\frac{1}{2} \sum_{i,j} (\Im Z )^{ij} \, p^*\omega_j \wedge
  q^*\overline{\omega}_j
\end{gather}
where $(\Im Z)^{ij}$ denote the coefficients of
  $(\Im Z)\meno$.

  To check this observe first that
  $h(\omega_i, \omega_j)=2 \Im z_{ij}$.  Indeed let
  $\mathscr{B}=\{a_i^*, b_j^*\}$ be the dual basis. If
  $D: H^1(C) \ra H_1(C)$ is Poincar\'e duality, then $D a_i^* = b_i $
  and $Db_i^* = -a_i$, so $\mathscr{B}$ is symplectic for
  $Q^*(\alf,\beta) = \int_C \alf \cup \beta$. Since
  $ \om_i = a^*_i + \sum_{k=1}^n z_{ik} b^*_k$, the result follows.

  Now \eqref{berg-periodi} is a consequence of the following general
  fact: if $\alpha_1,..., \alpha_g$ is a basis of $H^0(C, K_C)$ and
  $A$ is the matrix with entries $a_{ij}:=h(\alf_i, \alf_j)$, then
  $ \Berg=\sum_{i,j} a^{ij} p^*\alpha_i \wedge q^*\overline{\alpha}_j $.

\subsection{}
Next we show how to recover the Bergman kernel using the so-called
elementary potentials.  Let $(U,z)$ be a chart centered at $x\in C$
and set $u=\frac{\partial}{\partial z}(x).$ Classical results ensure
the existence of a harmonic function $f_u\in C^{\infty}(C -\{x\})$
such that $f_u=- \frac{1}{z}+g(z)$ on $U-\{x\}$ for some
$g\in C^{\infty}(U)$. The function $f_u$ is unique up to an additive
constant and it is called \emph{elementary potential} (see \cite[\S
3]{cfg}). We recall some of its properties that will be relevant in
our analysis.  It follows from the definition that $\delbar f_u$ is
smooth on $C$ and that
\begin{gather}
  \label{debarfu}
  \int_C\omega \wedge (-\delbar f_u)=2\pi i \omega(u),  \quad \text{for } \omega\in
  H^0(C, K_C).
\end{gather}
(See \cite[Section 3]{cfg} for more details.)  This shows that
elementary potentials are related to evaluation and the canonical
map. Therefore they are clearly related to Bergman kernel as we show
now.

\subsection{}For $x\in C$ and $u\in T_xC$, let
$\ev_u: H^0(C, K_C)\rightarrow \mathbb{C}$ be the evaluation map and
let $\ku\in H^0(K_C)$ be such that
\begin{gather}
  \label{defku}
  \ev_u=h(\cdot , \ku).
\end{gather}

\begin{lemma}
\label{berg}
  For $u\in T^{1,0}_xC$, $v\in T^{1,0}_y C$, with $x,y\in C$, we have
  \begin{gather*}
    \Berg((u,0),(0,\overline{v}))=h(\kv,\ku) = k_v(u).
  \end{gather*}
\end{lemma}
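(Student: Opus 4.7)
The plan is to compute all three quantities in coordinates given by a Hodge-unitary basis $\omega_1,\dots,\omega_g$ of $H^0(C,K_C)$, and check they agree.

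First I would expand $\Berg=\sum_j p^*\omega_j\wedge q^*\overline{\omega}_j$ and evaluate on the pair of tangent vectors $((u,0),(0,\overline{v}))$. Since $p^*\omega_j$ kills the second summand $T_yC\subset T_{(x,y)}S$ and $q^*\overline{\omega}_j$ kills the first summand, the wedge reduces to a single term, giving
\begin{equation*}
  \Berg((u,0),(0,\overline{v}))=\sum_{j=1}^g \omega_j(u)\,\overline{\omega_j(v)}.
\end{equation*}
This is the only place where the wedge-product sign convention matters, and it is routine.

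Next I would identify $\ku$ and $\kv$ in the basis $\{\omega_j\}$. Writing $\ku=\sum_j c_j\omega_j$, the defining identity $\ev_u(\omega_j)=h(\omega_j,\ku)$ together with unitarity of the basis gives $c_j=\overline{\omega_j(u)}$, so $\ku=\sum_j\overline{\omega_j(u)}\,\omega_j$, and likewise for $\kv$. Plugging into $h(\kv,\ku)$ and using that $h$ is sesquilinear with the chosen convention immediately yields $\sum_j\omega_j(u)\overline{\omega_j(v)}$. The same sum appears when one evaluates $\kv(u)=\sum_j\overline{\omega_j(v)}\,\omega_j(u)$.

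Comparing the three expressions closes the proof. The main obstacle, such as it is, is purely bookkeeping: one must be consistent about (i) the conjugation conventions defining $\overline{\omega}_j$ on $\barc$ versus its pullback to $Z$, (ii) the antilinear slot of $h$ as given in the paper, and (iii) the sign in $(\alpha\wedge\beta)(X,Y)=\alpha(X)\beta(Y)-\alpha(Y)\beta(X)$. Once these are fixed, no further geometric input (and in particular no use of elementary potentials or equation~\eqref{debarfu}) is needed; the lemma is essentially a reformulation of the reproducing property of $\Berg$ with respect to the evaluation functionals.
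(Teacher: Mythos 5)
Your proposal is correct and follows essentially the same route as the paper: expand $k_u$ and $k_v$ in a unitary basis via the reproducing property, observe that $h(k_v,k_u)=\ev_u(k_v)=k_v(u)$, and match the resulting sum $\sum_j\omega_j(u)\overline{\omega_j(v)}$ with the evaluation of $\Berg$ on $((u,0),(0,\overline{v}))$. The extra care you take with the conjugation and wedge-sign conventions is sound but not a departure from the paper's argument.
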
 
\begin{proof} Let $\omega_1,..., \omega_g$ be a unitary basis for $H^0(K_C)$. Then $\kv=\sum_j\lambda_j \omega_j,$ with $\lambda_j= h(\kv, \omega_j)=\overline{\omega_j(v)}$. Thus 
	$h(k_v, k_u)=k_v(u)=\sum_{j} \overline{\omega_j(v)}\omega_j(u)=\Berg((u,0),(0,\overline{v})).$
\end{proof}

\begin{lemma} \label{berg-potelem} Let $x,y\in C$, and
  $u\in T^{1,0}_xC$, $v\in T^{1,0}_y C$.  If $f_u$ is an elementary
  potential, then $ \delbar f_u=2\pi \overline{k_u}$. In particular
\begin{gather*}
  \Berg((u,0), (0, \overline{v}))=\frac{1}{2\pi}
  \delbar f_u(\overline{v}).
\end{gather*} 
 \end{lemma}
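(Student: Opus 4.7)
The plan is to establish the identity $\delbar f_u = 2\pi \overline{\ku}$ as a globally defined anti-holomorphic $(0,1)$-form on $C$, and then to obtain the evaluation formula by combining this with Lemma \ref{berg}. The strategy is to show that both sides lie in the finite-dimensional space $\overline{H^0(K_C)}$ of anti-holomorphic $(0,1)$-forms on $C$, and to separate elements there via the non-degenerate Hodge pairing with $H^0(K_C)$.

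First I would verify that $\delbar f_u$ is anti-holomorphic. Locally near $x$ one has $f_u = -1/z + g$ with $g$ smooth, and since $-1/z$ is holomorphic on $U-\{x\}$ the form $\delbar f_u = \delbar g$ extends smoothly across $x$ and hence defines a global smooth $(0,1)$-form on $C$. On $C-\{x\}$ the function $f_u$ is harmonic, i.e.\ $\del\delbar f_u=0$, and by continuity this identity persists on all of $C$. For a $(0,1)$-form on a curve, $\del$-closedness is exactly the condition of being anti-holomorphic, so $\delbar f_u \in \overline{H^0(K_C)}$.

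Next I would combine the defining property \eqref{debarfu} of the elementary potential with the definition \eqref{defku} of $\ku$. The former gives $\int_C \omega \wedge \delbar f_u = -2\pi i\,\omega(u)$, while the latter lets me rewrite $\omega(u) = h(\omega,\ku) = i\int_C \omega\wedge\overline{\ku}$. Substituting and simplifying yields $\int_C \omega \wedge (\delbar f_u - 2\pi\overline{\ku}) = 0$ for every $\omega \in H^0(K_C)$. The Hodge pairing $H^0(K_C)\times\overline{H^0(K_C)}\to\mathbb{C}$ given by $(\omega,\overline{\alpha})\mapsto\int_C\omega\wedge\overline{\alpha}$ is non-degenerate, because $i\int_C\omega\wedge\overline{\omega}=h(\omega,\omega)>0$ for $\omega\ne 0$. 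Since both $\delbar f_u$ and $2\pi\overline{\ku}$ belong to $\overline{H^0(K_C)}$, I conclude $\delbar f_u = 2\pi\overline{\ku}$.

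For the ``in particular'' clause I would evaluate both sides on the tangent vector $\bar v \in T^{0,1}_y C$, obtaining $\delbar f_u(\bar v) = 2\pi\overline{\ku}(\bar v) = 2\pi\overline{\ku(v)}$. On the other hand, Lemma \ref{berg} gives $\Berg((u,0),(0,\bar v)) = h(\kv,\ku) = \overline{h(\ku,\kv)} = \overline{\ku(v)}$ by Hermitian symmetry of $h$. Dividing by $2\pi$ and comparing yields the formula. The only delicate point is establishing the $\del$-closedness of $\delbar f_u$ across the puncture, but this is immediate from the combination of smoothness and harmonicity away from $x$; everything else is a direct Hodge-pairing computation.
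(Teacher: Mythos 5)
Your proof is correct and follows essentially the same route as the paper: both compute the pairing $\int_C\omega\wedge\delbar f_u$ from \eqref{debarfu} and \eqref{defku} and then use uniqueness inside the space of anti-holomorphic (equivalently, harmonic) $(0,1)$-forms to conclude $\delbar f_u=2\pi\overline{k_u}$, followed by the same evaluation via Lemma \ref{berg}. The only difference is that you spell out why $\delbar f_u$ is anti-holomorphic across the puncture, a point the paper leaves to the reference \cite{cfg}.
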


\begin{proof}
  From \eqref{debarfu} and \eqref{defku} it follows that
  $\int_C\omega \wedge (\delbar f_u)=-2\pi i \omega(u)=-2\pi i\
  h(\omega, k_u)=2\pi \int_C \omega \wedge\ \overline{k_u}$. So
  $\debar f_u$ and $2\pi \ove{k_u}$ have the same cohomology
  class. Since both are harmonic they coincide. Next by the previous
  Lemma
  $ \Berg((u,0), (0, \overline{v}))= h(k_v, k_u)= \overline{h(k_u,
    k_v)}=\overline{k_u(v)}=\overline{k_u}(\overline{v})=\frac{1}{2\pi}
  \delbar f_u(\overline{v})$.
\end{proof}

\section{Lie bracket}

In this section we study the Lie bracket i.e. the tensor $B$ on Siegel
space, introduced in \ref{intro}, and prove Theorem A.  We start by
recalling something about Siegel upper half-space.  Next we go through
several identications of the tangent space to $\sieg_g$ and write down
$B$ in terms of them (Proposition \ref{lemdue}).  Given a curve, we
apply this to its Jacobian, i.e. we consider $\bra$ as in \ref{intro2}  (Proposition \ref{teojac}).
We recall further 
identifications using the conjugate curve and the surface
$Z=C\times \barc$.  This allows to understand the dual map  $\bra^*$   as a map
$H^0(Z,K_Z) \ra H^0(Z,2K_Z)$.  Finally using the elementary potentials
we prove our main result Theorem \ref{teomulti}.

Let $(V,Q)$ be a real symplectic vector space.
If $ J \in \End V$ satisfies $ J^2=-I_V$ and $ J^*Q=Q$, the bilinear
form
$
  g_J(v, v') := Q(v, J v' )
$
is symmetric.  Siegel upper half-space is defined as 
\begin{gather*}
\sieg:=  \sieg(V,Q):=\{ J \in \End V: J^2=-I_V, J^*Q=Q, g_J 
  \gg 0\}.
\end{gather*}
For every $J$ we denote $V_{-1,0}(J)$ and $V_{0,-1}(J)$ the $\pm i$-eigenspaces of $J$ on $V_\C$. We also set 
\begin{gather*}
H^{1,0}_J :=\Ann V_{0,-1} \quad  H^{0,1}_J:=\Ann V_{-1,0}(J).  
\end{gather*}
We usually drop $J$ in the notation.
When $V=\R^{2g}$ and $Q$ is the standard form, we write $\sieg_g$.
The symplectic group $\Sp:=\Sp(V,Q)$ acts on $\sieg $ by
conjugation. This action is transitive and $\sieg$ is a Hermitian
symmetric space.
For $X \in \End V$ set $ Q_X := Q(\cd, X\cd ) $. Then
$ \sp=\sp(V,Q) = \{ X \in \End V : Q_X  $ is symmetric$\}$. If $\sp = \sp_J \oplus \liep$ is the Cartan decomposition at $J\in \sieg$, then
\begin{gather*}
  \liep = \{ X \in \sp: XJ+JX = 0\},
  \quad \sp_J = \{ X \in \sp: [J,X]=0\}.
\end{gather*}
We endow $\liep \cong T_J\sieg$ with the complex structure
$\hat{I}:= (1/2)\ad J$. Then
\begin{gather}
  \notag
  \liep_\C  
  = \{ X \in \sp_\C:  X(V_{-1,0} ) \subset V_{0,-1} \text { and } X(V_{0,-1} )\subset V_{-1,0}  \}
  \\
  \label{p10} \liep^{1,0} = \{X\in \Hom(V_{0,-1}, V_{-1,0}): Q_X
  \text{ is symmetric}\},
\end{gather}

\subsection{}
  \label{lagrangiani}
  We have an isomorphism
  \begin{gather*}
    \phi_Q: V_\C \lra V_\C^* , \quad \phi_Q( v):= Q(\cd, v).
  \end{gather*}
  Its inverse is denoted by $\psi_Q:=\phi_Q\meno$.
  For any Lagrangian subspace $L \subset V_\C$ the isomorphism
  $\phi_Q$
  maps $L$ onto $\Ann(L)$. Therefore $\phi_Q$ gives an isomorphism
  $V_{0,-1} \cong H^{1,0}$.

  As mentioned in \ref{intro} we are interested in the Lie bracket
  which can be seen as a section of a bundle over the symmetric space
  $\sieg$.  We wish to compute
  $B_J \in \est^2 T_J^*\sieg \otimes \sp_J$.  As usual it is useful to
  look at $B$ thourgh its complexification
\begin{gather*}
  B_J : ( T_J\sieg)_\C \times ( T_J\sieg)_\C \lra (\sp_J)_\C.
\end{gather*}
Recall that $  ( T_J\sieg)_\C =\liep_\C= \liep^{1,0}\oplus \liep^{0,1}$,
where $\liep^{1,0} $ is given by \eqref{p10}
and 
\begin{gather}
  \label{spjc} \begin{aligned} (\sp_J)_\C= & \{X \in \End V_\C:
    X(V_{-1,0} ) \subset V_{-1,0}, \\
    & X(V_{0,-1} ) \subset V_{0,-1},
    Q_X \text{ is simmetric}\}.\\
  \end{aligned}
\end{gather}
\begin{lemma}
  The map $B_J$ is of type $(1,1)$, i.e. it vanishes on vectors of the
  same type.
\end{lemma}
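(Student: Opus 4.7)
The plan is to argue directly from the description of $\liep^{1,0}$ (and $\liep^{0,1}$) as a subspace of $\End V_\C$, and use that $B_J$ is by construction the restriction of the Lie bracket on $\sp_\C \subset \End V_\C$, i.e.\ $B_J(X,Y) = XY - YX$. So the entire claim reduces to checking that the composition of two elements of $\liep^{1,0}$ is zero, and similarly for $\liep^{0,1}$.

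First I would make explicit the description of $\liep^{1,0}$ from \eqref{p10} inside $\End V_\C$. An element $X \in \liep_\C$ exchanges the two eigenspaces $V_{-1,0}$ and $V_{0,-1}$, so splits canonically as $X = X' + X''$ with $X'|_{V_{-1,0}} = 0$, $X'(V_{0,-1}) \subset V_{-1,0}$ and $X''|_{V_{0,-1}} = 0$, $X''(V_{-1,0}) \subset V_{0,-1}$. A quick eigenvalue check for $\hat{I} = \tfrac12 \ad J$ on the first piece: using $J = \pm i\,\mathrm{id}$ on $V_{\mp 1,0}$, one gets $[J,X']v = 2i X' v$ for $v \in V_{0,-1}$ and $[J,X']v = 0$ for $v \in V_{-1,0}$, so $\hat{I}(X') = iX'$. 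Hence $\liep^{1,0}$ is precisely the space of $X \in \sp_\C$ vanishing on $V_{-1,0}$ and sending $V_{0,-1}$ into $V_{-1,0}$, with $Q_X$ symmetric, matching \eqref{p10}. Symmetrically, $\liep^{0,1}$ consists of operators vanishing on $V_{0,-1}$ and sending $V_{-1,0}$ into $V_{0,-1}$.

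With this in hand, the vanishing is immediate. For $X, Y \in \liep^{1,0}$, on $V_{-1,0}$ both $XY$ and $YX$ vanish since $X$ and $Y$ both kill $V_{-1,0}$, while on $V_{0,-1}$ we have $Y(V_{0,-1}) \subset V_{-1,0}$ and $X$ again kills $V_{-1,0}$, so $XY = 0$; likewise $YX = 0$. Therefore $B_J(X,Y) = XY - YX = 0$. The same argument, with the roles of $V_{-1,0}$ and $V_{0,-1}$ swapped, handles $\liep^{0,1}$.

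There is no real obstacle here; the result is a structural consequence of the fact that $\liep^{1,0}$ and $\liep^{0,1}$ live in the off-diagonal Hom spaces between the two $J$-eigenspaces, so composing two such operators of the same type lands in the zero subspace. The only mildly fiddly point is to justify cleanly the identification of $\liep^{1,0}$ with operators that are literally zero on $V_{-1,0}$ (rather than merely interchanging the two eigenspaces), which is exactly what the $\hat{I}$-eigenvalue computation above provides.
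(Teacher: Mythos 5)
Your proof is correct and follows essentially the same route as the paper's: both reduce the claim to showing that the composition of two operators in $\liep^{1,0}$ vanishes because each kills $V_{-1,0}$ and maps $V_{0,-1}$ into $V_{-1,0}$ (the paper handles $\liep^{0,1}$ by invoking that $B_J$ is real rather than repeating the argument, but that is an inessential difference). Your extra verification of the $\hat{I}$-eigenvalue description of $\liep^{1,0}$ is a correct justification of what the paper simply records as equation \eqref{p10}.
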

\begin{proof}
Since $B_J$ is real, it is enough to show
that it vanishes on pairs of $(1,0)$-vectors.
If $X, Y \in \liep^{1,0}$, then $X ( \vc ) \subset V_{-1,0}$ and $Y\restr{V_{-1,0}} = 0$, thus $YX =0$. For the same reason also $XY=0$. Thus $B_J(X,Y) = XY -YX =0$.
\end{proof}

If $X \in \End V_\C$, let $X^* \in \End V_\C^*$ denote the transpose.
The transposition map $X \mapsto X^*$ is a canonical isomorphism
$\End V_\C \cong \End V_\C^*$.  It is useful to reinterpret everything
in terms of $\End \vcs$ rather than $\End \vc$.  Set
\begin{gather*}
   Q^*:=\psi_Q^* Q .
\end{gather*}
(Notation as in \ref{lagrangiani}.) Then $Q^*$ is a symplectic form on
$V^*_\C$.
\begin{lemma}\label{lemzero} If $X\in \End \vc$, then $Q_X$ is symmetric iff
  $Q^*_{X^*}$ is symmetric.
\end{lemma}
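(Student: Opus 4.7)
The plan is to unwind $Q^*_{X^*}$ through the isomorphism $\psi_Q$, recognizing that the transpose on $V_\C^*$ corresponds under $\psi_Q$ to the $Q$-adjoint on $V_\C$, and then to observe that the condition of having $Q_{(\cdot)}$ symmetric is preserved by $Q$-adjunction.

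\medskip

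\noindent\textbf{Step 1.} Unwind definitions. By the definition of $Q^*=\psi_Q^* Q$ we have $Q^*(\alpha,\beta)=Q(\psi_Q\alpha,\psi_Q\beta)$, so
\begin{gather*}
Q^*_{X^*}(\alpha,\beta)=Q^*(\alpha, X^*\beta)=Q(\psi_Q\alpha,\;\psi_Q X^*\beta).
\end{gather*}
Thus everything reduces to identifying the operator $\psi_Q\circ X^*\circ\phi_Q$ on $V_\C$.

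\medskip

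\noindent\textbf{Step 2.} (Main step.) Introduce the $Q$-adjoint $X^\dagger\in\End V_\C$, characterized by $Q(Xv,w)=Q(v,X^\dagger w)$ for all $v,w\in V_\C$; this exists and is unique since $Q$ is nondegenerate. I would then check that $\psi_Q\circ X^*=X^\dagger\circ\psi_Q$: for $\beta=\phi_Q(v)$, the functional $X^*\beta=\beta\circ X$ sends $w$ to $Q(Xw,v)=Q(w,X^\dagger v)$, which is precisely $\phi_Q(X^\dagger v)$; applying $\psi_Q$ gives the identity. This is the conceptual point — the canonical transpose on $V_\C^*$ is carried to the $Q$-adjoint on $V_\C$.

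\medskip

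\noindent\textbf{Step 3.} Combining Steps 1 and 2,
\begin{gather*}
Q^*_{X^*}(\alpha,\beta)=Q(\psi_Q\alpha,\,X^\dagger\psi_Q\beta)=Q_{X^\dagger}(\psi_Q\alpha,\psi_Q\beta).
\end{gather*}
Since $\psi_Q$ is an isomorphism, $Q^*_{X^*}$ is symmetric if and only if $Q_{X^\dagger}$ is symmetric. It remains to reduce symmetry of $Q_{X^\dagger}$ to that of $Q_X$. Using antisymmetry of $Q$ and the definition of $X^\dagger$, one computes
\begin{gather*}
Q_X(u,v)=Q(u,Xv)=-Q(Xv,u)=-Q(v,X^\dagger u)=-Q_{X^\dagger}(v,u),
\end{gather*}
so symmetry of $Q_X$ and symmetry of $Q_{X^\dagger}$ are equivalent conditions. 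Chaining the two equivalences finishes the proof.

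\medskip

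\noindent\textbf{Expected obstacle.} There is no real obstacle: the statement is a formal consequence of the dictionary between transposition with respect to the canonical pairing $V_\C\times V_\C^*\to\C$ and adjunction with respect to $Q$. The only thing that needs care is the bookkeeping of signs coming from the antisymmetry of $Q$, which is why I would record the intermediate identity $Q_X(u,v)=-Q_{X^\dagger}(v,u)$ explicitly rather than try to shortcut it.
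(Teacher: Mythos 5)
Your proof is correct and follows essentially the same route as the paper: both introduce the $Q$-adjoint (your $X^\dagger$, the paper's $\tilde X$ defined by $Q(Xa,b)=Q(a,\tilde Xb)$), show that transposition on $V_\C^*$ corresponds to it under $\phi_Q$, and then use the antisymmetry of $Q$ to relate $Q_{X^\dagger}$ back to $Q_X$. The only difference is organizational — the paper folds your Steps 2 and 3 into a single chain of equalities ending in $-Q^*_{X^*}(\alpha,\beta)=Q_X(a,b)$ — so there is nothing substantive to add.
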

\begin{proof}
  Define $\tilde{X} \in \End V_\C$ by $Q(Xa,b) =Q(a,\tilde{X}b)$. Then
  \begin{gather*}
    X^*\phi_Q a = X^* Q(\cd, a) = Q (X\cd ,a) = Q (\cd, \tilde{X} a) =
    \phi_Q \tilde{X}a .
  \end{gather*}
  Given $\alfa, \beta \in V^*_\C$ let $a=\psi_Q\alf, b=\psi_Q
  \beta$. Then
  \begin{gather*}
    -Q^*_{X^*}(\alf,\beta)=  Q^*(X^*\alf, \beta) = Q(\psi_Q X^*\phi_Qa , \psi_Q \phi_Q b) = \\
    =Q(\tilde {X}a , b ) = Q(a, Xb) = Q_X(a,b).
  \end{gather*}
  So $-Q^*_{X^*}(\alf,\beta)= Q_X(a,b)$.  The statement follows.
\end{proof}

\begin{lemma}\label{lemuno}
 If
  $X\in \End V_\C$, then
  \begin{align*}
    X\restr{V_{-1,0}} =0 & \Longrightarrow \im X^* \subset H^{0,1} \\
    \im X \subset V_{-1,0} & \Longrightarrow  X^*\restr{H^{0,1}} = 0.
  \end{align*}
\end{lemma}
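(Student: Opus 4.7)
The plan is to unwind the definitions: the transpose $X^*$ is determined by $(X^*\alpha)(v) = \alpha(Xv)$ for all $v \in V_\C$ and $\alpha \in V^*_\C$, and the annihilator description $H^{0,1} = \Ann V_{-1,0}$ was given just before the statement of Siegel space. Both implications are then one-liners; there is no real obstacle, and the lemma is essentially bookkeeping that will be used later together with Lemma \ref{lemzero} to translate the description \eqref{p10} of $\liep^{1,0}$ from $\End V_\C$ to $\End V_\C^*$.

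For the first implication, I would assume $X|_{V_{-1,0}} = 0$ and pick an arbitrary $\alpha \in V^*_\C$. To show $X^*\alpha \in H^{0,1} = \Ann V_{-1,0}$, I take any $v \in V_{-1,0}$ and compute $(X^*\alpha)(v) = \alpha(Xv) = \alpha(0) = 0$. Hence $X^*\alpha$ annihilates $V_{-1,0}$, i.e.\ lies in $H^{0,1}$.

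For the second implication, I would assume $\im X \subset V_{-1,0}$ and take any $\alpha \in H^{0,1}$, so $\alpha|_{V_{-1,0}} = 0$. For arbitrary $v \in V_\C$, $(X^*\alpha)(v) = \alpha(Xv)$, and since $Xv \in V_{-1,0}$ this equals $0$. Thus $X^*\alpha = 0$ as an element of $V^*_\C$, which is exactly $X^*|_{H^{0,1}} = 0$.

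Since both arguments are immediate, I would present them compactly as a single short proof rather than splitting into two cases; the only thing worth flagging is which side of the duality pairing $\Ann V_{-1,0}$ refers to, so that the reader does not confuse $H^{0,1} = \Ann V_{-1,0}$ with $H^{1,0} = \Ann V_{0,-1}$.
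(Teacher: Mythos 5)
Your proof is correct and is essentially the paper's argument: the paper packages the same computation into the standard identities $\Ann \im L = \ker L^*$ and $\Ann \ker L \supset \im L^*$, whereas you unwind the pairing $(X^*\alpha)(v)=\alpha(Xv)$ directly, which if anything is slightly more self-contained. No gap.
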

\begin{proof}
  Recall that for any linear map $L : E \ra F$ of vector spaces 
  $\Ann \im L = \ker L^*$.  Now
  $ X\restr{V_{-1,0}} =0 \Longrightarrow V_{-1,0} \subset \ker X
  \Longrightarrow H^{0,1} = \Ann V_{-1,0} \supset \Ann \ker X = \im
  X^*$.  And
  $ \im X \subset V_{-1,0} \Longrightarrow \ker X^* = \Ann \im X
  \supset \Ann V_{-1,0}=H^{0,1}$.
\end{proof}

\begin{prop}\label{lemdue}
  There are canonical isomorphisms
  \begin{gather}
    \label{liep10} \liep^{1,0} \cong \{ t \in \Hom (H^{1,0},H^{0,1}) :
    Q^*_{t} \text
    { is symmetric}\}, \\
    \notag (\sp_J)_\C \cong \End H^{1,0}.
  \end{gather}
  Using these isomorphism $B_J$ gets identified with the map
  \begin{gather*}
    B_J: \liep^{1,0} \times \ove{\liep^{1,0}} \lra \End H^{1,0} \quad
    (s,\bar{t}) \mapsto \bar{t}{s}.
  \end{gather*}
\end{prop}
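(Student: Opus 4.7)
The plan is to realize both isomorphisms by a single mechanism---transposition followed by restriction, $X\mapsto X^{*}|_{H^{1,0}}$ (or $|_{H^{0,1}}$, as appropriate)---and then to compute the commutator $[s,\bar t]$ directly, exploiting an annihilator relation to kill one of the two terms.

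For the first isomorphism, I start with $X\in\liep^{1,0}$ as in \eqref{p10}, so that $X|_{V_{-1,0}}=0$ and $\im X\subset V_{-1,0}$. Lemma~\ref{lemuno} then gives $X^{*}|_{H^{0,1}}=0$ and $\im X^{*}\subset H^{0,1}$, so $t:=X^{*}|_{H^{1,0}}$ is a well-defined map $H^{1,0}\to H^{0,1}$ that recovers $X^{*}$, and hence $X$, uniquely. The symmetry of $Q^{*}_{t}$ on $H^{1,0}$ follows from the symmetry of $Q^{*}_{X^{*}}$ (Lemma~\ref{lemzero}) together with the observation that $Q^{*}_{X^{*}}$ vanishes automatically whenever one argument lies in $H^{0,1}$, since $\im X^{*}\subset H^{0,1}$ is Lagrangian for $Q^{*}$. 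Bijectivity onto the prescribed target is then dimensional or via an explicit inverse.

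For the second isomorphism, \eqref{spjc} shows that $X\in(\sp_J)_{\C}$ preserves $V_{-1,0}$ and $V_{0,-1}$; dualizing, $X^{*}$ preserves $H^{1,0}=\Ann V_{0,-1}$ and $H^{0,1}=\Ann V_{-1,0}$, so $X^{*}|_{H^{1,0}}\in\End H^{1,0}$. Injectivity: $X^{*}|_{H^{1,0}}=0$ implies $\im X\subset\Ann H^{1,0}=V_{0,-1}$, which combined with $X(V_{-1,0})\subset V_{-1,0}$ yields $X|_{V_{-1,0}}=0$; the symmetry of $Q_{X}$ and the nondegenerate $Q$-pairing between the Lagrangian complements $V_{-1,0}$ and $V_{0,-1}$ then force $X|_{V_{0,-1}}=0$ as well. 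A dimension count ($g^{2}$ on both sides) finishes surjectivity.

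For the bracket, $B_{J}(s,\bar t)=[s,\bar t]=s\bar t-\bar t s$, which I identify through the second isomorphism by applying $(\cdot)^{*}|_{H^{1,0}}$. The decisive observation is that $\im(\bar t s)\subset\im\bar t\subset V_{0,-1}$, which is annihilated by every $\alpha\in H^{1,0}=\Ann V_{0,-1}$; hence $(\bar t s)^{*}|_{H^{1,0}}=0$ and only the $s\bar t$ term survives. For $\alpha\in H^{1,0}$,
\begin{equation*}
(s\bar t)^{*}\alpha=\alpha\circ s\bar t=\bar t^{*}\bigl(s^{*}\alpha\bigr),
\end{equation*}
which, once $s$ and $\bar t$ are reinterpreted via the first isomorphism as elements of $\Hom(H^{1,0},H^{0,1})$ and $\Hom(H^{0,1},H^{1,0})$ respectively, is precisely the composition $\bar t\circ s\in\End H^{1,0}$; the reversal of order from $s\bar t$ to $\bar t s$ is exactly contravariance of transposition. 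The main obstacle is bookkeeping: matching $V_{\C}$ with $V_{\C}^{*}$, Lagrangian subspaces with their annihilators, and signs coming from $\phi_{Q}/\psi_{Q}$ and the $Q$-skewness of elements of $\sp$. Conceptually, however, the whole argument rests on the single annihilator observation that makes one of the two commutator terms invisible on $H^{1,0}$.
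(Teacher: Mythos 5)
Your argument is correct and follows essentially the same route as the paper: both isomorphisms are realized by transposition $X\mapsto X^{*}$ restricted to $H^{1,0}$, using Lemmas~\ref{lemzero} and \ref{lemuno}, and the bracket computation hinges on the same key point that one of the two commutator terms vanishes on $H^{1,0}$ because the relevant transpose kills (or lands in) the annihilator $H^{1,0}=\Ann V_{0,-1}$. The only cosmetic difference is that you establish bijectivity of $(\sp_J)_\C\cong\End H^{1,0}$ by injectivity plus a dimension count, where the paper argues directly that the symmetry of $Q^{*}_{X^{*}}$ is vacuous on pairs of the same type and determines $X^{*}$ on $H^{0,1}$.
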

\begin{proof}
  The first isomorphism is simply the restriction to $\liep^{1,0}$ of
  the map $X\mapsto X^*$. Lemmata \ref{lemzero} and \ref{lemuno} show
  that indeed the image of $\liep^{1,0}$ is the set of
  $X^*\in \End\vcs$ that vanish on $H^{0,1}$, have image in $H^{0,1}$
  and such that $Q^*_{X^*}$ is symmetric.  To describe the second
  isomorphism start from \eqref{spjc}. Again the Lemmata show that the
  map $X\mapsto X^*$ sends $(\sp_J)_\C$ to the set of
  $X^* \in \End\vcs$ that preserve each $H^{p,q}$ and such that
  $ Q^*_{X^*} $ is symmetric.  The latter means that
  $Q^* (u, X^* v) = Q^*(v, X^*u)$. This identity is trivial if $u $
  and $v$ have the same type, since in that case both term vanish.
  Hence $X^* \restr{H^{1,0}}$ is an arbitrary endomorphism of
  $H^{1,0}$.  On the contrary the identity shows that for any
  $v\in H^{0,1}$ the value $X^*v$ is determined by
  $X^* \restr{H^{1,0}}$.  Hence
  \begin{gather}
    \label{spjc2}
    (\sp_J)_\C \lra \End H^{1,0}, \quad X \mapsto X^*\restr{H^{1,0}}
  \end{gather}
  is the desired isomorphism.  Now let $X,Y \in \liep^{1,0}$ and set
  $s:=X^*$, $t:=Y^*$. Then $ B(X,\ove{Y})^* = \bar{t} s - s
  \bar{t}$. Since $\bar{t}\restr{H^{1,0}} =0$, in the isomorphism
  \eqref{spjc2} $B(X,Y) \in (\sp_J)_\C$ corresponds to
  $ B(X,\ove{Y})^*\restr{H^{1,0}} = \bar{t} s$.
\end{proof}



Since $H^{1,0} = \Ann V_{0,-1} $, we have
$\Ann H^{1,0} = V_{0,-1}$. So there is a canonical isomorphism
$(H^{1,0})^* \cong \vc / \Ann H^{1,0} = \vc /V_{0,-1} \cong V_{-1,0}$.
We treat this isomorphism as an identity.  By \ref{lagrangiani}
$\phi_Q$ maps $V_{-1,0} $ isomorphically onto $H^{0,1}$.  Thus
$\psi_Q=\phi_Q\meno$ restricts to an isomorphism
\begin{gather}
  \label{1001}
  \psi_Q: H^{0,1}\stackrel{ \cong}{\lra} (H^{1,0})^* .
\end{gather}
\begin{lemma}
  \label{inversi}
For $\bar{\om} \in H^{0,1}$, we have $\psi_Q(\bar{\om}) = Q^*(\bar{\om} , \cd)$.
\end{lemma}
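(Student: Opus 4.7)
The plan is simply to unwind all the identifications carefully; the statement will follow from the definition of $Q^*=\psi_Q^*Q$ together with the observation that $\phi_Q$ swaps the two isotropic subspaces $V_{-1,0}$ and $V_{0,-1}$ with the annihilators $H^{1,0}$ and $H^{0,1}$.

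First I would spell out the isomorphism \eqref{1001}. Since $V_{-1,0}$ is a Lagrangian subspace of $V_\C$, we have $\phi_Q(V_{-1,0}) = \Ann(V_{-1,0}) = H^{0,1}$, so $\psi_Q$ restricts to an isomorphism $H^{0,1}\to V_{-1,0}$. Composing with the identification $V_{-1,0}\cong V_\C/V_{0,-1}\cong(H^{1,0})^*$ treated as an identity just above the statement, the resulting isomorphism $H^{0,1}\to(H^{1,0})^*$ sends $\bar\omega$ to the functional on $H^{1,0}$ defined by
\begin{gather*}
\omega \longmapsto \omega\bigl(\psi_Q(\bar\omega)\bigr),
\end{gather*}
since $[v]\in V_\C/V_{0,-1}$ acts on $\alpha\in\Ann V_{0,-1}=H^{1,0}$ by $\alpha\mapsto\alpha(v)$.

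It then suffices to check $\omega(\psi_Q(\bar\omega)) = Q^*(\bar\omega,\omega)$ for every $\omega\in H^{1,0}$. Set $a := \psi_Q(\bar\omega)\in V_{-1,0}$ and $b := \psi_Q(\omega)\in V_{0,-1}$. By the definition $Q^* = \psi_Q^* Q$, the right-hand side equals $Q(a,b)$. On the other hand, $\omega = \phi_Q(b) = Q(\cdot,b)$, so the left-hand side is $\omega(a) = Q(a,b)$ as well. This proves the identity; there is no serious obstacle, only the need to be careful about which of $V_{-1,0}$, $V_{0,-1}$, $H^{1,0}$, $H^{0,1}$ is being used at each step.
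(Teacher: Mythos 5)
Your proof is correct and takes essentially the same route as the paper's: both come down to unwinding $Q^*=\psi_Q^*Q$ and $\phi_Q(v)=Q(\cdot,v)$, i.e.\ to the comparison of $Q^*(\phi_Q(w),\phi_Q(v))=Q(w,v)$ with $\phi_Q(w)(v)=Q(v,w)$. The only cosmetic difference is that the paper establishes the global identity $\psi_Q=-\phi_{Q^*}$ on all of $V_\C^*$ and then uses antisymmetry of $Q^*$, whereas you evaluate both sides directly on $H^{1,0}$ via the identification $(H^{1,0})^*\cong V_{-1,0}$.
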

\begin{proof}
  First we claim that $\phi_{Q^*}\phi_Q = -\id_{\vc}$ i.e.
  $\psi_Q= - \phi_{Q^*}$.  Indeed fix $v\in \vc$.  That
  $\phi_{Q^*}\phi_Q (v) = - v$ means that $Q^*(\cd, \phi_Q(v)) = -v$,
  i.e. that $ Q^*(\la, \phi_Q(v)) = - \la (v) $ for any $\la\in \vcs$.
  Assume $\la = \phi_Q(w)$ for $w\in \vc$. Then $\la (v) = Q(v,w)$ and
  $Q^*(\la, \phi_Q(v)) $ $= Q^*(\phi_Q(w) , \phi_Q(v)) = Q(w,v)$.
\end{proof}

\togliere{
\begin{lemma}
  \label{simmetria}
  A linear map $t : W \ra W^*$ lies in $S^2W^*$ if and only if
  $t(w)(w') = t(w')(w)$ for any $w, w'\in W$.
\end{lemma}
Now consider $ \liep^{1,0}$ as described in \eqref{liep10} and
consider the map that sends $ t \in $ $ \Hom (H^{1,0},H^{0,1}) $ to
$ \psi_Qt \in \Hom(H^{1,0}, (H^{1,0})^*) = (H^{1,0})^* \otimes
(H^{1,0})^* $.  This map is injective.  We claim that it maps
$\liep^{1,0}$ onto $S^2 H^{1,0\, *}$.  It is enough to prove that
$Q^*_t$ is symmetric iff $\psi_Q t \in S^2(H^{1,0})^*$. But
$\psi_Q = - \phi_{Q^*}$, so
\begin{gather*}
  \psi_Q t (\alf)(\alf') = - \phi_{Q^*} (t\alf) (\alf') = - Q^*(\alf',
  t\alf) = - Q^*_t (\alf', \alf).
\end{gather*}
Applying Lemma \ref {simmetria} we get in fact that $Q^*_t$ is
symmetric iff $t \in S^2H^{1,0 \, *}$. Thus we have an isomorphism
\begin{gather}
  \label{liepsim}
  \liep^{1,0} \stackrel{\cong}{\lra} S^2 H^{1,0\, *} , \quad t\mapsto
  \psi_Q t.
\end{gather}
The inverse isomorphism is the following: take
$\tau \in S^2H^{1,0\, *} $; this can be seen as a linear map
$\tau : H^{1,0} \ra H^{1,0 \, *}$. Then $\tau$ is mapped to
$\phi_Q\tau \in \liep^{1,0} \subset \Hom (H^{1,0},H^{),1})$.
}


Now consider the period map $j : \mg \lra \ag$.  Let $x\in \mg$ be the
moduli point of a curve $C$: $x=[C]$.  If we fix a symplectic basis of
$H_1(C,\Zeta)$ we get a sympletic isomorphism of $H_1(C, \R)$ with the
intersection form onto $ (\R^{2g}, Q)$. Thus the Hodge decomposition
$H^1(C,\C) = H^1(C) \oplus H^{0,1}(C) $ gives a complex structure on
$H^1(C,\C)$, hence a point $J\in \sieg_g$.
\togliere{In a completely equivalent
way (at least for what matters in this paper) one could also lift $j$
to Teichm\"uller space to get a period map with values directly in
$\sieg_g$.}

In the following we use $T_x\mg $ to denote the \emph{real} tangent
space i.e. the tangent space of $\mg$ as a differentiable manifold
(and similarly for $\ag)$. Thus
$(T_x\mg) _\C =T^{1,0}_x \mg \oplus T^{0,1}_x\mg$ and
$T_x^{1,0}\mg = H^1(C,T_C)$, while
$(T_x\ag)_\C = (T_J \sieg_g)_\C = \liep^{1,0}\oplus \liep^{0,1}$.  By
a theorem of Griffiths the map $ dj_x : H^1(C,T_C) \lra \liep^{1,0} $
using the interpretation \eqref{liep10} is given by
\begin{gather*}
  dj_x (\xi) = \xi \cup \cd : H^{1,0} \lra H^{0,1} , \quad dj_x (\xi)
  (\om) = \xi \cup \om .
\end{gather*}
(See e.g. \cite[pp. 234ff]{voisin-theorie}.)  Now
$T_x ^{0,1} \mg = \ove { H^1(C,T_C)} $ is the conjugate vector space,
i.e. it has the same underlying real vector space as $H^1(C,T_C)$ but
multiplication by $i$ is replaced with multiplication by $-i$.  Since
$j$ is holomorphic, its differential is a direct sum of the map
$dj_x: H^{1}(C,T_C) \ra \liep^{1,0}$ and its conjugate. Hence for
$\bar{\eta} \in \ove{H^1(C,T_C)}$ and $\om\in H^{1,0}(C)$ we have
\begin{gather*}
  dj_x (\bar{\eta}) = \bar{\eta}\cup  \cd  : H^{1,0} \lra H^{0,1} , \quad dj_x (\bar{\eta} )
  (\bar{\om}) = \ove{\eta \cup \om} .
\end{gather*}
As mentioned in the Introduction
our goal is to study the  map
\begin{gather*}
  \bra_x :=dj_x ^* B_J : H^1(C,T_C) \times \ove{H^1(C, T_{C})}
  \lra \End H^{1,0}(C).
\end{gather*}
The following is a consequence of  Proposition \ref{lemdue}.
\begin{prop}
  \label{teojac}
  For $\xi, \eta \in H^1(C,T_C)$ and $\om \in H^{1,0}(C)$, we have
  \begin{gather*}
    \bra (\xi, \bar{\eta}) (\om) = \bar{\eta} \cup ( \xi \cup \om).
  \end{gather*}
\end{prop}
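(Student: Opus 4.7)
The proof is essentially a direct assembly of the pieces already in place: pull back the formula for $B_J$ from Proposition \ref{lemdue} along $dj_x$ using Griffiths' description of the differential of the period map.

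\textbf{Outline.} By definition $\bra_x(\xi, \bar{\eta}) = B_J(dj_x(\xi), dj_x(\bar{\eta}))$ composed with the identification $(\sp_J)_\C \cong \End H^{1,0}$. Proposition \ref{lemdue} tells us that under the identification $\liep^{1,0} \cong \{t \in \Hom(H^{1,0}, H^{0,1}) : Q^*_t \text{ symmetric}\}$, the bracket is simply composition: $B_J(s, \bar{t}) = \bar{t} \circ s$, where $s : H^{1,0} \to H^{0,1}$ and the conjugate $\bar{t} : H^{0,1} \to H^{1,0}$ lives in the conjugate space.

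\textbf{Step 1: Interpret Griffiths' formula under the isomorphism \eqref{liep10}.} The formula
$$dj_x(\xi) = \xi \cup \cdot : H^{1,0}(C) \lra H^{0,1}(C)$$
already expresses $dj_x(\xi)$ precisely as an element of $\Hom(H^{1,0}, H^{0,1})$, so $s := dj_x(\xi)$ sends $\om \mapsto \xi \cup \om$. The only thing to check is that $Q^*_s$ is symmetric, which follows from the fact that $\xi \cup \cdot$ is the infinitesimal variation of Hodge structure of a polarized family (hence automatically satisfies the Griffiths symmetry), but this is exactly the content of the Griffiths description, so no extra work is needed.

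\textbf{Step 2: Describe the antiholomorphic component.} Since $j$ is holomorphic, $dj_x$ on $T^{0,1}_x \mg = \ove{H^1(C,T_C)}$ is the complex-conjugate of $dj_x$ on $T^{1,0}_x \mg$. Thus $\bar{t} := dj_x(\bar{\eta})$, interpreted via the conjugate of \eqref{liep10} as an element of $\Hom(H^{0,1}, H^{1,0})$, sends $\bar{\om} \mapsto \ove{\eta \cup \om} =: \bar{\eta} \cup \bar{\om}$, extending the cup-product notation to conjugate classes in the natural way.

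\textbf{Step 3: Compose.} By Proposition \ref{lemdue},
$$\bra_x(\xi, \bar{\eta})(\om) = (\bar{t} \circ s)(\om) = \bar{t}(\xi \cup \om) = \bar{\eta} \cup (\xi \cup \om),$$
which is the claimed formula.

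The proof has no real obstacle: all the substantive work was carried out in Proposition \ref{lemdue} (the identification of the bracket with composition of linear maps between $H^{1,0}$ and $H^{0,1}$) and in the Griffiths formula for $dj_x$. The only conceptual point to be careful about is the bookkeeping between $V_{-1,0}/V_{0,-1}$ (on the Jacobian side) and $H^{1,0}/H^{0,1}$ (on the curve side), but these identifications have already been fixed via the chosen symplectic basis of $H_1(C, \Zeta)$ and the Hodge decomposition.
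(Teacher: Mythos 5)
Your proof is correct and follows exactly the route the paper takes: the paper states Proposition \ref{teojac} as an immediate consequence of Proposition \ref{lemdue} combined with Griffiths' description of $dj_x$ (and its conjugate), which is precisely your assembly of $s = dj_x(\xi)$, $\bar{t} = dj_x(\bar{\eta})$, and $B_J(s,\bar{t}) = \bar{t}s$. No substantive difference.
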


Once again it is useful to dualize. This time we dualize the map
$\bra$ itself.  Using \eqref{1001} we can describe the domain of
$\brad$ as follows:
\begin{gather*}
  ( \End H^{1,0})^* = (H^{1,0\, *} \otimes H^{1,0})^* = H^{1,0}
  \otimes H^{1,0\, *} \cong H^{1,0}\otimes H^{0,1} .
\end{gather*}
More explicitely, let $\om, \om \in H^{1,0}$ and $t\in \End
H^{1,0}$. Then $\om \otimes \bar{\om}' \in H^{1,0}\otimes H^{0,1}$.
Recalling Lemma \ref{inversi} one easily verifies that the
corresponding element of $(\End H^{1,0})^*$ is the linear functional
mapping $t$ to $Q^*(\bar{\om}', t {\om} )$.

The dual of $H^1(C,T_C)$ is $H^0(C,2K_C)$. Thus the dual of $\bra$ is
defined on $ H^0(C,2K_C) \otimes \ove{H^0({C}, 2K_{C} )}$ and maps to
$H^{1,0}(C)\otimes \ove{H^{1,0}(C)}$.

Denoting by $\barc $
the conjugate variety we have
\begin{gather*}
  \ove{H^0({C}, 2K_{C} )} = H^0(\barc, 2K_{\barc}), \quad \ove{H^{1,0}(C)} = H^{1,0}(\barc).
\end{gather*}
Thus $\bra^*$ is a map from $ H^{1,0}(C) \otimes H^{1,0}(\barc) $ to
$ H^0(C,2K_C)\otimes H^0(\barc, 2K_\barc)$.  We further reinterpret
domain and target of $\brad$ as spaces of sections of appropriate
bundles on $Z=C\times \barc$.  Denoting by $p:Z\ra C$ and
$q: Z \ra \barc$ the projections and given bundles $L\ra C$ and
$M\ra \barc$, set $ L\boxtimes M : = p^*L \otimes q^* M$.  The map
\begin{gather*}
  H^0(C,L) \otimes H^0(\barc,M) \lra H^0(Z, L \boxtimes M), \quad s\otimes
  t \mapsto p^*s \otimes q^* t,
\end{gather*}
is an isomorphism.  For any positive integer $n$ there is a canonical
isomorphism
\begin{gather}
  \label{isomKCKS}
  K_Z ^n\cong K_C^n \boxtimes K^n_\barc ,
\end{gather}
obtained as follows: if $\alf\in K_{C,x}$ and $\beta \in K_{\barc,y}$,
denote by $\alfa^n \in (K_{C,x})^{\otimes n}$ and
$\beta ^n \in (K_{\barc,y})^{\otimes n}$ the tensor powers. Then
$ \alfa^n\otimes \beta^n \in
(K_{C}^n\boxtimes K^n_\barc) _{(x,y) } $, while
$ (p^*\alfa \wedge q^* \beta)^n \in K_{Z,(x,y) } $.
The isomorphism \eqref{isomKCKS} maps $\alfa^n\otimes \beta^n$ to
$ (p^*\alfa \wedge q^* \beta)^n$.

We now prove Theorem A.
\begin{teo}
  \label{teomulti}
  The map
\begin{gather*}
  \bra^* : H^0(Z,K_Z) \lra H^0(Z, 2K_Z)
\end{gather*}
coincides with the multiplication by $-i\Berg$.
\end{teo}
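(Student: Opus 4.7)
The plan is to verify the asserted identity by evaluating both sides on decomposable tensors $\omega_1\otimes\bar\omega_2\in H^0(Z,K_Z)\cong H^{1,0}(C)\otimes H^{0,1}(C)$; by linearity this suffices. The image $\bra^*(\omega_1\otimes\bar\omega_2)$ lies in $H^0(Z,2K_Z)\cong H^0(C,2K_C)\otimes H^0(\barc,2K_\barc)$, which by Serre duality on $C$ and on $\barc$ is determined by its pairings with $\xi\otimes\bar\eta\in H^1(C,T_C)\otimes\overline{H^1(C,T_C)}$. For $g\ge 2$ the Schiffer variations $\xi_u$ with $u\in T_x^{1,0}C$ span $H^1(C,T_C)$, so the proof reduces to matching the two pairings on test tensors $\xi_u\otimes\bar\xi_v$ for all $x,y\in C$ and all $u\in T_x^{1,0}C$, $v\in T_y^{1,0}C$.

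For the LHS, Proposition \ref{teojac} gives $\bra(\xi_u,\bar\xi_v)(\omega_1)=\bar\xi_v\cup(\xi_u\cup\omega_1)$. I identify the harmonic representative of $\xi_u\cup\omega_1\in H^{0,1}(C)$ as follows. Serre duality together with the classical Schiffer residue computation yields $\int_C(\xi_u\cup\omega_1)\wedge\omega\propto\omega_1(u)\omega(u)$ for every $\omega\in H^0(K_C)$, while \eqref{debarfu} gives $\int_C(-\delbar f_u)\wedge\omega=2\pi i\,\omega(u)$. Matching these two evaluations and invoking Lemma \ref{berg-potelem} to replace $\delbar f_u$ by its harmonic representative $2\pi\overline{k_u}$, one sees that $\xi_u\cup\omega_1$ has harmonic representative a definite scalar multiple of $\omega_1(u)\overline{k_u}$. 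Iterating the same procedure to apply $\bar\xi_v\cup$ to the class represented by $\overline{k_u}$ produces $\bra(\xi_u,\bar\xi_v)(\omega_1)$ as a multiple of $\omega_1(u)\,\overline{k_u(v)}\,k_v\in H^{1,0}(C)$. Pairing with $\bar\omega_2$ via $Q^*(\bar\omega_2,\cdot)=\int_C\bar\omega_2\wedge\cdot$ (Lemma \ref{inversi}), using $h(k_v,\omega_2)=\overline{\omega_2(v)}$ and Lemma \ref{berg} identifying $k_v(u)$ with $\Berg((u,0),(0,\bar v))$, yields a constant times $\omega_1(u)\,\overline{\omega_2(v)}\,\Berg((u,0),(0,\bar v))$.

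For the RHS, expand $\Berg=\sum_j p^*\sigma_j\wedge q^*\overline{\sigma_j}$ with $\{\sigma_j\}$ an orthonormal basis of $H^{1,0}(C)$; under \eqref{isomKCKS} the element $-i\Berg\cdot(p^*\omega_1\wedge q^*\bar\omega_2)\in H^0(Z,2K_Z)$ corresponds to $-i\sum_j(\sigma_j\omega_1)\otimes(\overline{\sigma_j}\,\bar\omega_2)$. Its Serre pairing with $\xi_u\otimes\bar\xi_v$, computed factor-by-factor via the same Schiffer formula, equals
\[
-i\,\omega_1(u)\,\overline{\omega_2(v)}\,\sum_j\sigma_j(u)\overline{\sigma_j(v)}=-i\,\omega_1(u)\,\overline{\omega_2(v)}\,\Berg((u,0),(0,\bar v))
\]
by Lemma \ref{berg}. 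Matching this with the LHS evaluation yields the claimed identity.

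The bulk of the work is not any single conceptual step but the careful bookkeeping of all the multiplicative constants accumulating through this chain of identifications: the normalization constant in the Schiffer Serre pairing, the factor $i$ in $h(\alpha,\beta)=i\int\alpha\wedge\bar\beta$, the antisymmetry of $Q^*$ on $H^{1,0}\oplus H^{0,1}$, and the conjugation relating Serre duality on $\barc$ to that on $C$. The essential role of Lemma \ref{berg-potelem} is to provide the explicit harmonic representative $2\pi\overline{k_u}$ of $\delbar f_u$, which forces the Bergman kernel to appear on the LHS as a harmonic object and causes the coefficient $-i$ to emerge as the net outcome of these combinatorics.
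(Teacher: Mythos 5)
Your strategy is essentially the one the paper follows: reduce to decomposable tensors, test against Schiffer variations $\xi_u\otimes\overline{\xi_v}$, compute $\overline{\xi_v}\cup(\xi_u\cup\omega)$ via the elementary potentials and Lemma \ref{berg-potelem}, and convert the resulting expression $\omega(u)\,\overline{k_u(v)}\,k_v$ into the Bergman kernel via Lemmas \ref{berg} and \ref{inversi}. All the right ingredients are on the table, and the reduction to Schiffer variations is sound.

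The gap is that you never actually compute the constant, and the constant is the content of the theorem. Every quantitative step is stated only up to proportionality: you write $\int_C(\xi_u\cup\omega_1)\wedge\omega\propto\omega_1(u)\omega(u)$, conclude that $\xi_u\cup\omega_1$ is ``a definite scalar multiple'' of $\omega_1(u)\overline{k_u}$, and your closing paragraph concedes that the coefficient $-i$ is supposed to ``emerge'' from bookkeeping you have not performed. What this establishes is at best that $\bra^*$ is multiplication by $c\,\Berg$ for some universal constant $c$; the claim $c=-i$ rather than, say, $+i$ or $-2\pi i$ is exactly what remains to be proved, and it is sensitive to every sign convention you list. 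The paper supplies the missing normalizations explicitly: equation \eqref{dj}, $\xi_u\cup\cdot=-2\pi\,\ev_u\otimes\overline{k_u}$, proved by an explicit Dolbeault computation with a bump function and the elementary potential (not merely by matching evaluations up to scale); the identity $\beta(\xi_u)=2\pi i\,\beta(u)$ for $\beta\in H^0(2K_C)$, which produces the factor $-1/(4\pi^2)$ relating evaluation at $((u,0),(0,\overline{v}))$ to pairing with $\xi_u\otimes\xi_{\overline{v}}$; and $Q^*(\overline{\omega'},k_v)=i\,\overline{\omega'(v)}$, which is where the final $i$ actually comes from. A related symptom: your displayed formula for the right-hand side equates the pairing with $\xi_u\otimes\overline{\xi_v}$ directly to $-i\,\omega_1(u)\overline{\omega_2(v)}\,\Berg((u,0),(0,\overline{v}))$, silently dropping the $4\pi^2$ produced by that pairing; this is harmless only if the identical omission occurs on the left, which cannot be checked without doing the computation. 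To close the gap, establish the analogue of \eqref{dj} with its precise constant $-2\pi$ and carry the constants through the two cup products and the $Q^*$-pairing.
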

\begin{proof}
  Fix a point $x\in C$ and a chart $(U,z)$ centered in $x$. Set
  $u=\frac{\partial}{\partial z}(x)$ and consider the Schiffer
  variation $\xi_u$ at $x\in C$.  We recall that
  \begin{gather}
    \label{dj}
    \xi_u \cup = - 2\pi \ev_u \otimes \ove{k}_u. 
  \end{gather}
  Indeed fix a Dolbeault representative
  $\phi=\frac{\delbar b}{z}\frac{\del}{\del z},$ where
  $b\in C^{\infty}$ is a bump function which is equal to 1 in a
  neighbourhood of $x$. For $\omega\in H^0(K_C)$, with local
  expression $\omega=h(z)dz$ on $U$, it holds that
  \begin{gather*}
    dj_x(\xi_u)(\omega)= \xi_u \cup \omega=\left[\phi \cdot
      \omega\right ]=\left[\frac{\delbar (bh)}{z}\right].
  \end{gather*}
  If $f_u$ is an elementary potential, the functions
  \begin{gather*}
    \frac{b}{z} + f_u \quad \text{and} \quad b \cd \frac{h-h(0)}{z}
  \end{gather*}
  are smooth on $C$. Hence
  \begin{gather*}
    \delbar\left(\frac{bh}{z}\right) = \debar \left ( b\cd \frac{h -
        h(0)}{z} \right ) + h(0) \cd \debar \left ( \frac{b}{z} + f_u
    \right ) - h(0) \debar f_u.
  \end{gather*}
  Thus $dj_x(\xi_u)(\omega)=
  h(0)\cd [-\delbar f_u].$ As usual we identify $H^{0,1}(C)$ with the
  space of antiholomorphic forms.  Thus since $h(0) = \om(u)$, using
  Lemma \ref{berg-potelem} and the fact that $\ove{k}_u$ is
  antiholomorphic, we get \eqref{dj}.
  
  We also recall (see \cite[Lemma 2.3]{cfg}) that for
  $\beta\in H^0(C, 2K_C)=H^1(C, T_C)^*$, we have
  $\beta(\xi_u)=2\pi i \beta(u)$.  It follows that for
  $\Phi \in H^0(Z,2K_Z) $
  \begin{gather*}
    \Phi((u,0),(0,\bar{v})) = - \frac{1}{4 \pi^2} \Phi ( \xi_u \otimes \xi_{\bar{v}}).
  \end{gather*}

  Now we can prove the statement.  Without loss of generality we can
  assume that $\Omega=p^*\omega\wedge q^*\overline{\omega'}$, with
  $\omega, \omega'\in H^0(K_C)$.
%
Then
  \begin{gather*}
    (  \bra^*\Omega)_{(x,y)}((u,0), (0, \overline{v}))=-\frac{1}{4\pi ^2}\bra^*(p^*\omega\wedge q^*\overline{\omega'})(\xi_u\otimes \xi_{\overline{v}})=\\
    =-\frac{1}{4\pi^2}(p^*\omega\wedge q^*\overline{\omega'})(\bra(\xi_u\otimes \xi_{\overline{v}})).
  \end{gather*}
 It follows from \eqref{dj} that
  \begin{gather*}
    \xi_{\overline{v}} \cup ( \xi_u \cup \om) =-2\pi\omega(u)\cd
    \xi_{\overline{v}} \cup \ove{k}_u= -2\pi\omega(u)\cd
    \overline{\xi_v \cup k_u} =
    \\
    =4\pi^2 \omega(u) \cd \ove{k_u (v)}\cd k_v.
  \end{gather*}
  Using this and  Lemma  \ref{inversi}  we get
  \begin{gather*}
        (  \bra^*\Omega)_{(x,y)}((u,0), (0, \overline{v}))=
 -   \frac{1}{4\pi^2}  Q^*(\overline{\omega'}, \bra(\xi_u\otimes \xi_{\overline{v}})\omega)=\\
    =-\frac{1}{4\pi^2} Q^*(\overline{\omega'},\xi_{\overline{v}}
    \cup ( \xi_u \cup \om)) =- \omega(u) \cd \ove{k_u (v)}\cd
    Q^*(\ove{\om'}, k_v).
  \end{gather*}
  Since $ Q^*(\ove{\om'}, k_v)= i\cd \ove{\om'(v)}$ and using 
 Lemma\ref{berg} we finally get
  \begin{gather*}
    ( \bra^*\Omega)_{(x,y)}((u,0), (0, \overline{v})) =- i\omega(u)
    \ove{\om'(v)} \cd \ove{k_u (v)} =
    \\
    = - i \cd (\Omega \cdot \Berg)_{(x,y)} ((u, 0), (0, \overline{v}))
  \end{gather*}
\end{proof}

      \section{The form $\hat{\eta}$}

      Fix a smooth complex projective curve $C$ of genus $g >0$ and
      let $\Delta\subset S=C\times C $ be the diagonal. In this
      section we recall the definition of the meromorphic form
      $\hat{\eta}\in H^0(C, K_S(2\Delta))$ constructed in
      \cite{cpt,cfg}, which governs the second fundamental form of the
      Torelli map with respect to the Siegel metric. Next we recall
      from \cite{bcfp} the analysis of its cohomology class. Finally
      we prove our second main result, i.e. Theorem B.

      \subsection{} The construction of the form $\hat{\eta}$ goes as
      follows.  For $x\in C$, let
$$j_x: H^0(C, K_C(2x))\hookrightarrow H^1(C\setminus\{x\}, \mathbb{C})=H^1(C, \mathbb{C})$$
be the map that associates to $\omega\in H^0(C, K_C(2x))$ its de Rham
cohomology class. This map is an injection since $C\neq \PP^1$. As
$H^{1,0}(C)\subset j_x(H^0(C, K_C(2x)))$ and $h^0(C, K_C(2x))=g+1$,
the preimage $j_x^{-1}(H^{0,1}(C))$ is a line. Thus, fixed a chart
$(U,z)$ centered at $x\in C$, there exists a unique element $\phi$ in
this line such that on $U\setminus\{x\}$
$$\phi=\left(\frac{1}{z^2}+h(z)\right)dz$$
with $h\in \mathcal{O}_C(U)$. Set $u=\frac{\partial}{\partial z}(x),$ and define the map
$$\eta_x: T^{1,0}_xC\rightarrow H^0(C, K_C(2x)), \quad \lambda u\mapsto \eta_x(\lambda u):=\lambda\phi.$$
It is easy to see that $\eta_x$ does not depend on the choice of the
local coordinate.  In the following we will also use the fact that if
$f_u$ is an elementary potential, then $\del f_u=\eta_u$, see
\cite[Lemma 3.1]{cfg}.

Next consider the line bundle $L:=K_S(2\Delta)$ on $S$ and set
$$V:=p_*(q^*K_C(2\Delta)), \quad E:=p_*L.$$
By the projection formula $E=K_C\otimes V$. Also, since $q^*K_C(2\Delta)|_{\{x\}\times C}=q^*K_C(2x)$, we have that $H^0(p^{-1}(x), q^*K_C(2\Delta))\simeq H^0(C, K_C(2x))$ and the fiber of the holomorphic vector bundle $V\rightarrow C$ on $x\in C$ is isomorphic to $H^0(C, K_C(2x))$. Thus $\eta_x\in E_x$.  More precisely, the map $x\mapsto \eta_x$ is a holomorphic section of $E$ (\cite[Proposition 3.4]{cfg}).

Finally, since $E=p_*L$, there is an isomorphism between $H^0(C, E)$ and $H^0(S, L)$ that associates to $\alpha\in H^0(C, E)$ the section $\hat{\alpha}$ of $L$ such that $\alpha_x=\hat{\alpha}|_{\{x\}\times C}\in E_x$. The form $\hat{\eta}\in H^0(S, K_S(2\Delta))$ is defined as the 
holomorphic section of $L$ corresponding to $\eta\in H^0(C, E)$.
Note that, in particular, for $u\in T^{1,0}_xC$ and $v\in T^{1,0}_xC$ with $x\neq y$, it holds
\begin{gather}
  \label{etaeta}
  \eta_x(u)(v)=\hat{\eta}(u,v).
\end{gather}

\subsection{}
The form $\hat{\eta}$ also appears in an unpublished book of Gunning
\cite{gunning} under the name of \emph{intrinsic double differential
  of the second kind}.

\subsection{}
The importance of $\hat{\eta}$ comes from the fact that the second fundamental form of the Torelli map outside the hyperelliptic locus coincides with the multiplication by $\hat{\eta}$ \cite {cpt,cfg}.
The form $\hat{\eta}$ has been further studied in \cite{bcfp} in relation with projective structures on compact Riemann surfaces.
Moreover   Section $5$ in \cite{bcfp} contains an  analysis of the cohomology class of the form $\hat{\eta}$. Denoting $j: S\setminus\Delta\hookrightarrow S$ the inclusion map, it follows from the exact sequence of homology groups for the pair $(S, \Delta)$ and Poincaré and Lefschetz dualities, that the homomorphism $j^*: H^2(S, \mathbb{Z})\rightarrow H^2(S \setminus \Delta, \mathbb{Z})$ is surjective and its kernel is generated by the (pure) class of the diagonal. Consequently, the Hodge decomposition of $H^2(S)$ induces a decomposition of  $H^2(S \setminus \Delta)$. In particular,  for any $\zeta\in H^0(C, K_S(2\Delta))$
there is $[\gamma]\in H^2(S)$ such that
$[\zeta]=j^*[\gamma] \in H^2(S\setminus \Delta)$.
Moreover  the $(0,2)$ part of $[\gamma]$ vanishes. So
$$[\gamma]=\gamma^{2,0}+\gamma^{1,1}$$
where $\gamma^{2,0}$ is holomorphic and $\gamma^{1,1}$ is harmonic of
type $(1,1)$.  (Harmonicity is for any product metric.) In this
context, the fundamental result on the cohomology of $\hat{\eta}$ is
that $$[\hat{\eta}]=j^*[\gamma^{1,1}]\in H^2(S\setminus \Delta),$$
where $\gamma^{1,1}$ is a harmonic ${(1,1)-}$form on $S$. That is,
$\hat{\eta}$ has cohomology class in $H^2(S\setminus \Delta)$ of pure
type $(1,1)$. In fact, this implies the characterization of
$\hat{\eta}$ as the unique element (up to multiples) of
$H^0(S, K_S(2\Delta))$ with cohomology class in
$H^2(S\setminus \Delta)$ of pure type $(1,1)$. In the following we
give an explicit description of the harmonic representative
$\gamma^{1,1}$ of $\hat{\eta}$.


\subsection{} Denote by $\mathcal{A}^{p,q}$ the sheaf of smooth
differential forms of type $(p,q)$ on $S$.  Denote by
$\mathcal{A}^{p,q}(n\Delta)$ be the sheaf of $(p,q)-$forms having a
pole of order at most $n$ on $\Delta$, i.e. those forms $\om$ such
that $x^n \om$ is smooth of type $(p,q)$, where $x=0$ is a local
equation of $\Delta$.


For $u\in(T_xC)_{\mathbb{C}}$, denote by $u^{1,0}$ the
$(1,0)-$component of $u$ and set $f_u:=f_{u^{1,0}}$.  For
$u\in(T_xC)_{\mathbb{C}}$ and $v\in (T_yC)_{\mathbb{C}}$, set
  \begin{gather*}
    \alpha(u,v):=2f_v(p)+f_u(q)
  \end{gather*}
  It is clear that 
  $\alpha\in H^0(S, \mathcal{A}^{1,0}(\Delta))$.

We can now prove Theorem B.
\begin{teo}
\label{teoeta}
  The Bergman kernel is the $(1,1)-$harmonic representative
  of the cohomology class of $\hat{\eta}\in H^0(S, K_S(2\Delta))$ in
  $H^2(S\setminus \Delta)$. More precisely,
	$$\hat{\eta}- {2\pi}\Berg  =d\alpha, $$
        that is $\del\alpha=\hat{\eta}$ and
        $\delbar \alpha=- 2\pi \Berg$.
      \end{teo}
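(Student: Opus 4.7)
The plan is to decompose $d\alpha = \hat\eta - 2\pi\mathbf{K}$ by bi-type and verify $\partial\alpha = \hat\eta$ and $\bar\partial\alpha = -2\pi\mathbf{K}$ separately; the $(0,2)$-part of $d\alpha$ vanishes automatically since $\alpha$ is of type $(1,0)$. The simple-pole structure of the elementary potentials $f_u$ at their base points immediately implies that the coefficients of $\alpha$ acquire at worst a simple pole along $\Delta$, so $\alpha \in H^0(S, \mathcal{A}^{1,0}(\Delta))$.

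For $\bar\partial\alpha = -2\pi\mathbf{K}$, I would work in local coordinates on $S\setminus\Delta$ and apply Lemma \ref{berg-potelem}, which gives $\bar\partial f_u = 2\pi \bar{k}_u$, to each of the coefficients of $\alpha$. Expanding $\bar{k}_u = \sum_j \omega_j(u)\,\bar\omega_j$ in the unitary basis and comparing with the definition $\mathbf{K} = \sum_j p^*\omega_j \wedge q^*\bar\omega_j$, the resulting mixed component of $\bar\partial\alpha$ is identified with $-2\pi\mathbf{K}$. For $\partial\alpha = \hat\eta$, I would invoke the identity $\partial f_u = \eta_u$ recalled in Section \ref{ssBergman} together with \eqref{etaeta}, which expresses $\hat\eta((u,0),(0,v))$ as $\eta_x(u)(v)$; this allows the $(1,0)$-derivatives of the coefficients of $\alpha$ to be rewritten as evaluations of the coefficient function of $\hat\eta$. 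Using the symmetry of $\hat\eta$ under the swap of the two factors of $S$ (which carries a sign because $\sigma^*(dz \wedge dw) = -dz \wedge dw$), the two contributions combine, weighted by the factor $2$ in the definition of $\alpha$, into exactly $\hat\eta$.

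The main technical obstacle in both computations is controlling the dependence of $f_u$ on its pole point as that point varies: one needs a consistent choice of normalization for the elementary potentials (for instance, by fixing the value at a reference point of $C$) so that the family $\{f_u\}$ depends holomorphically on the pole point. This holomorphic dependence is what ensures that the off-diagonal bi-type contributions in $\bar\partial\alpha$ cancel, leaving only the mixed $dz \wedge d\bar w$-component matching $\mathbf{K}$. The specific coefficient $2$ in the definition of $\alpha$ is then precisely what is required to make the individual contributions from the two terms assemble into the clean identity $d\alpha = \hat\eta - 2\pi\mathbf{K}$ without residual terms.
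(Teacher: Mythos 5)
Your treatment of the two components that actually appear in the paper's proof is exactly the paper's argument: for $\del\alpha((u,0),(0,v))$ you use $\del f_u=\eta_x(u)$, identity \eqref{etaeta} and the symmetry of $\hat{\eta}$, so that the two contributions $2\eta_y(v)(u)-\eta_x(u)(v)$ collapse to $\hat{\eta}((u,0),(0,v))$; for $\debar\alpha((u,0),(0,\ove{v}))$ you use Lemma \ref{berg-potelem} and the expansion of $\ove{k_u}$ in a unitary basis. Up to that point the proposal and the paper coincide.

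The gap is in your last paragraph, i.e.\ precisely in the part where you go beyond what the paper records. The identity $d\alpha=\hat{\eta}-2\pi\Berg$ forces \emph{all} remaining components of $d\alpha$ to vanish, and in particular the component on pairs $((\ove{u},0),(0,v))$ with $u\in T^{1,0}_xC$, $v\in T^{1,0}_yC$: both $\hat{\eta}$ (of type $(2,0)$) and $\Berg=\sum_j p^*\omega_j\wedge q^*\ove{\omega}_j$ annihilate such pairs. But the same bracket formula you are using gives $d\alpha((\ove{u},0),(0,v))=2(\ove{U},0)\bigl(f_v(p)\bigr)=2\,\debar f_v(\ove{u})=4\pi\,\ove{k_v}(\ove{u})$, where the $\debar$ is taken in the \emph{evaluation} variable of $f_v$ (the pole stays at $y$). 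This term is produced by the very Lemma \ref{berg-potelem} you invoke, it is unaffected by any additive renormalization of the family $\{f_u\}$, and it is not identically zero, since $k_v\neq 0$ for $v\neq 0$ and $\ove{k_v}(\ove{u})=\ove{\Berg((u,0),(0,\ove{v}))}$. Your proposed mechanism --- normalizing so that $f_u$ depends holomorphically on its pole point --- controls only the derivatives with respect to the pole point, hence only the $d\bar z\wedge dz$ and $d\bar w\wedge dw$ components; it cannot affect this $d\bar z\wedge dw$ component. So the cancellation asserted in your closing paragraph is not established, and this is exactly where the real content of the statement sits; you cannot discharge it by citing the paper's computation, which also only records the $((u,0),(0,v))$ and $((u,0),(0,\ove{v}))$ components.
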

      \begin{proof}
        We first observe that for $u\in(T_xC)_{\mathbb{C}}$ and
        $v\in (T_yC)_{\mathbb{C}}$, denoted by $U, V$ two vector
        fields on $C$ such that $U_{x}=u$ and $V_y=v$, since
        $[(U,0), (0,V) ]=0$, we have that
$$d	\alpha((u,0), (0, v))=(U,0)(\alpha(0,v))-(0,V)(\alpha(u,0)).$$
Now assume $u\in T^{1,0}_xC$, $v\in T^{1,0}_yC$ and that $U$ and $V$ are (1,0). For $x\neq y$,
$\del f_u=\eta_x(u)$ and we get
$$\del\alpha((u,0), (0, v))=2(U,0)f_v-(0,V)f_u=$$
$$=2\del f_v(u)-\del f_u(v)=2\eta_y(v)(u)-\eta_x(u)(v)=\hat{\eta}((u,0), (0, v)),$$
where for the last equality we used \eqref{etaeta} and  the symmetry of  $\hat{\eta}$  (see \cite[Lemma 3.5]{cfg}).
Similarly using  Lemma \ref{berg-potelem} 
\begin{gather*}
\delbar\alpha((u,0), (0, \overline{v}))=-(0,\ove{V})f_u=-\delbar f_u(\overline{v})
=- {2\pi} \Berg((u,0), (0, \overline{v})).
\end{gather*}
\end{proof}

\end{document}